
\documentclass[times]{speauth}

\usepackage{moreverb}

\usepackage{amsfonts,amssymb,newlfont,graphicx,subfigure,algorithmic,algorithm,fixltx2e,booktabs,bm}
\usepackage{epstopdf}
\usepackage{empheq} 
\usepackage{appendix}
\numberwithin{algorithm}{section}
\usepackage[nomarkers,nolists]{endfloat}

\usepackage[colorlinks,bookmarksopen,bookmarksnumbered,citecolor=red,urlcolor=red]{hyperref}
\numberwithin{algorithm}{section}

\newcommand\BibTeX{{\rmfamily B\kern-.05em \textsc{i\kern-.025em b}\kern-.08em
T\kern-.1667em\lower.7ex\hbox{E}\kern-.125emX}}

%
\newtheorem{thm}{Theorem}[section]

\newtheorem{rem}{Remark}[section]

\numberwithin{equation}{section}
\renewcommand{\theequation}{\thesection.\arabic{equation}}
%

%
\def\simgt{\,\hbox{\lower0.6ex\hbox{$>$}\llap{\raise0.3ex\hbox{$\sim$}}}\,}
\def\simlt{\,\hbox{\lower0.6ex\hbox{$<$}\llap{\raise0.3ex\hbox{$\sim$}}}\,}
\def\simgteq{\,\hbox{\lower0.6ex\hbox{$\ge$}\llap{\raise0.6ex\hbox{$\sim$}}}\,}
\def\simlteq{\,\hbox{\lower0.6ex\hbox{$\le$}\llap{\raise0.6ex\hbox{$\sim$}}}\,}
\makeatletter
\def\user@resume{resume}
\def\user@intermezzo{intermezzo}
\newcounter{previousequation}
\newcounter{lastsubequation}
\newcounter{savedparentequation}
\setcounter{savedparentequation}{1}

\makeatother

\begin{document}
\runningheads{Kareem T. Elgindy}{Barycentric Gegenbauer Quadratures}

\title{High-Order, Stable, And Efficient Pseudospectral Method Using Barycentric Gegenbauer Quadratures}

\author{Kareem T. Elgindy\corrauth}

\address{Mathematics Department, Faculty of Science, Assiut University, Assiut 71516, Egypt}

\corraddr{Mathematics Department, Faculty of Science, Assiut University, Assiut 71516, Egypt}

\begin{abstract}
The work reported in this article presents a high-order, stable, and efficient Gegenbauer pseudospectral method to solve numerically a wide variety of mathematical models. The proposed numerical scheme exploits the stability and the well-conditioning of the numerical integration operators to produce well-conditioned systems of algebraic equations, which can be solved easily using standard algebraic system solvers. The core of the work lies in the derivation of novel and stable Gegenbauer quadratures based on the stable barycentric representation of Lagrange interpolating polynomials and the explicit barycentric weights for the Gegenbauer-Gauss (GG) points. A rigorous error and convergence analysis of the proposed quadratures is presented along with a detailed set of pseudocodes for the established computational algorithms. The proposed numerical scheme leads to a reduction in the computational cost and time complexity required for computing the numerical quadrature while sharing the same exponential order of accuracy achieved by \cite{Elgindy2013}. The bulk of the work includes three numerical test examples to assess the efficiency and accuracy of the numerical scheme. The present method provides a strong addition to the arsenal of numerical pseudospectral methods, and can be extended to solve a wide range of problems arising in numerous applications.
\end{abstract}
\keywords{Barycentric interpolation; Gegenbauer polynomials; Gegenbauer quadrature; Integration matrix; Pseudospectral method.}
\maketitle

\vspace{-6pt}

\section{Introduction}
\label{int}
The past few decades have seen a conspicuous attention towards the solution of differential problems by working on their integral reformulations; cf. \cite{Elgindy2009,Elgindy2013a,Franccolin2014,Elgindy2012d,Tang2015,Coutsias1996,Greengard1991,Viswanath2015,Driscoll2010,Olver2013,El-Gendi1969}. Perhaps one of the reasons that laid the foundation of this methodology appears in the well stability and boundedness of numerical integral operators in general whereas numerical differential operators are inherently ill-conditioned; cf. \cite{Funaro1987,Greengard1991,Elgindy2013b}. The numerical integral operator used in the popular pseudospectral methods is widely known as the spectral integration matrix (also called the operational matrix of integration), which dates back to \cite{El-Gendi1969} in the year 1969. In fact, the introduction of the numerical integration matrix has provided the key to apply the rich and powerful matrix linear algebra in many areas \cite{Elgindy2013b}.

In 2013, \cite{Elgindy2013} presented some novel numerical quadratures based on the concept of numerical integration matrices. Their unified approach employed the
Gegenbauer basis polynomials to achieve rapid convergence rates for small/medium range of spectral expansion terms while using Chebyshev and Legendre bases polynomials for a large-scale number of expansion terms. The established quadratures were presented in basis form, and were parameter optimized in the sense of minimizing the Gegenbauer parameter associated with the quadrature truncation error. This key idea allowed for interpolating the integrand function at some Gegenbauer-Gauss (GG) sets of points called the adjoint GG points instead of using the same integration points for constructing the numerical quadrature. This approach provides in turn the luxury of evaluating quadratures for any arbitrary integration points for any desired degree of accuracy; thus increasing the accuracy of collocation schemes using relatively small number of collocation points; cf. \cite{Elgindy2013,Elgindy2013a,Elgindy2012d,Elgindy2016}.

In the current article, we extend the works of Elgindy and Smith-Miles \cite{Elgindy2013,Elgindy2013a,Elgindy2016}, and develop some novel and efficient Gegenbauer integration matrices (GIMs) and quadratures based on the stable barycentric representation of Lagrange interpolating polynomials and the explicit barycentric weights for the GG points. The present numerical scheme represents an improvement over the aforementioned works as we reduce the computational cost and time complexity required for computing the numerical quadratures while sharing the same order of accuracy.


The rest of the article is organized as follows: In Section \ref{sec:pre}, we give some basic preliminaries relevant to Gegenbauer polynomials and their orthogonal basis and linear barycentric rational interpolations. In Section \ref{sec:BGIMAQ}, we derive the barycentric GIM and quadrature, and provide a rigorous error and convergence analysis. In Section \ref{sec:TOBGAQ1}, we construct the optimal barycentric GIM in some optimality measure, and analyze its associated quadrature error in Section \ref{subsec:ECA123}. Section \ref{sec:CA1} is devoted for a comprehensive discussion on some efficient computational algorithms required for the construction of the novel GIMs and quadratures. A discussion on how to resolve boundary-value problems using the barycentric GIM is presented in Section \ref{subsec: RBVPUTBG1}. Three numerical test examples are studied in Section \ref{sec: NEAA1} to assess the efficiency and accuracy of the numerical scheme. We provide some concluding remarks and possible future directions in Section \ref{conc}. Finally, a detailed set of pseudocodes for the established computational algorithms is presented in Appendix \ref{appendix:CAP}.

\section{Preliminaries}
\label{sec:pre}
\vspace{-2pt}
In this section, we briefly recall some preliminary properties of the Gegenbauer polynomials and their orthogonal interpolations. 
The Gegenbauer polynomial $G_n^{(\alpha )}(x)$, of degree $n \in \mathbb{Z}^+$, and associated with the parameter $\alpha > -1/2$, is a real-valued function, which appears as an eigensolution to a singular Sturm-Liouville problem in the finite domain $[-1, 1]$ \cite{Szego1975}. It is a symmetric Jacobi polynomial, $P_n^{(\nu_1, \nu_2)}(x)$, with $\nu_1 = \nu_2 = \alpha - 1/2$, and can be standardized through \cite[Eq. (A.1)]{Elgindy2012d}. It is an odd function for odd $n$ and an even function for even $n$. The Gegenbauer polynomials can be generated by the three-term recurrence equations \cite[Eq. (A.4)]{Elgindy2013}, or in terms of the hypergeometric functions \cite[Eq. (2.3)]{Elgindy2016}. The weight function for the Gegenbauer polynomials is the even function $w^{(\alpha)}(x) = {(1 - {x^2})^{\alpha - 1/2}}$. The Gegenbauer polynomials form a complete orthogonal basis polynomials in $L_{w^{(\alpha)}}^2[-1, 1]$, and their orthogonality relation is given by the following weighted inner product:
\begin{equation}
\left(G_m^{(\alpha)}, G_n^{(\alpha)}\right)_{w^{(\alpha)}} = \int_{ - 1}^1 {G_m^{(\alpha )}(x)\, G_n^{(\alpha )}(x)\, w^{(\alpha)}(x)\, dx}  = \left\| {G_n^{(\alpha )}} \right\|_{{w^{(\alpha )}}}^2 {\delta _{m,n}} = {\lambda}_n^{(\alpha )} {\delta _{m,n}},
\end{equation}
 where
\begin{equation}\label{sec:pre:eq:normak1}
{\lambda}_n^{(\alpha )} = \left\| {G_n^{(\alpha )}} \right\|_{{w^{(\alpha )}}}^2 = \frac{{{2^{1 - 2\alpha }}\,\pi\,   \Gamma (n + 2\alpha )}}{{n!\,(n + \alpha )\,{\Gamma ^2}(\alpha )}},
\end{equation}
is the normalization factor, and $\delta _{m,n}$ is the Kronecker delta function. We denote the GG nodes and their corresponding Christoffel numbers by $x_{n,k}^{(\alpha)}, \varpi_{n,k}^{(\alpha)}, k = 0, \ldots, n$, respectively. The reader may consult Refs. \cite{Abramowitz1965,Szego1975,Bayin2006,Elgindy2013,Elgindy2013b} for more information about this elegant family of polynomials.
\subsection{Orthogonal Gegenbauer interpolation}
\label{subsec:OGI}
The function
\begin{equation}\label{sec:ort:eq:modal}
{P_n}f(x) = \sum\limits_{j = 0}^n {{{\tilde f}_j}\,G_{j}^{(\alpha )}(x)} ,
\end{equation}
is the Gegenbauer interpolant of a real function $f$ defined on $[-1, 1]$, if we compute the coefficients ${{\tilde f}_j}$ so that
\begin{equation}
	{P_n}f({x_k}) = f({x_k}),\quad k = 0, \ldots ,n,
\end{equation}
for some nodes $x_k \in [-1, 1], k = 0, \ldots ,n$. If we choose the interpolation points $x_k, k = 0, \ldots, n$, to be the GG nodes $x_{n,k}^{(\alpha)}, k = 0, \ldots,n$, then we can simply compute the discrete Gegenbauer transform using the discrete inner product created from the GG quadrature by the following formula:
\begin{equation}\label{sec:ort:eq:sgt}
{{\tilde f}_j} = \frac{{{{\left({P_n}f,G_{j}^{(\alpha )}\right)}_{n}}}}{{\left\| {G_{j}^{(\alpha )}} \right\|_{w^{(\alpha )}}^2}} = \frac{{{{\left(f,G_{j}^{(\alpha )}\right)}_{n}}}}{{\left\| {G_{j}^{(\alpha )}} \right\|_{w^{(\alpha )}}^2}} = \frac{1}{{\lambda _{j}^{(\alpha )}}}\sum\limits_{k = 0}^n {\varpi _{n,k}^{(\alpha )}\,f_{n,k}^{(\alpha )}\,G_{j}^{(\alpha )}\left(x_{n,k}^{(\alpha )}\right)},\quad j = 0, \ldots ,n,
\end{equation}
where $f_{n,k}^{(\alpha )} = f\left(x_{n,k}^{(\alpha )}\right)\, \forall k$. Substituting Equation \eqref{sec:ort:eq:sgt} into \eqref{sec:ort:eq:modal} yields the Lagrange basis form of the Gegenbauer interpolation (nodal approximation) of $f$ at the GG nodes as follows:
\begin{equation}\label{sec:ort:eq:Lagint1}
	{P_n}f(x) = \sum\limits_{k = 0}^n {{f_{n,k}^{(\alpha)}}\,\mathcal{L} _{n,k}^{(\alpha )}(x)} ,
\end{equation}
where $\mathcal{L} _{n,k}^{(\alpha )}(x)$, are the Lagrange interpolating polynomials defined by
\begin{equation}\label{sec:ort:eq:Lag1}
	\mathcal{L} _{n,k}^{(\alpha )}(x) = \varpi _{n,k}^{(\alpha )}\sum\limits_{j = 0}^n {{{\left( {\lambda _{j}^{(\alpha )}} \right)}^{ - 1}}\,G_{j}^{(\alpha )}\left(x_{n,k}^{(\alpha )}\right)\,G_{j}^{(\alpha )}(x)} ,\quad k = 0, \ldots ,n.
\end{equation}

It is noteworthy to mention that the cost of the discrete Gegenbauer transform \eqref{sec:ort:eq:sgt} amounts to $O(n^2)$ operations in general by direct evaluation, except for Chebyshev points, where the cost can be reduced to $O(n\, \log n)$ using the FFT. Therefore, we need to perform $O(n^2)$ operations in general to compute the value of the modal interpolant \eqref{sec:ort:eq:modal} for every new value $x$. Similarly, and despite the numerically stable form of the nodal interpolating polynomial \eqref{sec:ort:eq:Lagint1}, its evaluation also requires $O(n^2)$ operations in general at each new value $x$; cf. \cite{Kopriva2009,Wang2014}. Moreover, adding a new data pair $\left( {x_{n + 1,k}^{(\alpha )},f_{n + 1,k}^{(\alpha )}} \right)$ requires an entirely new computation of every $\mathcal{L} _{n+1,k}^{(\alpha )}(x),\, k = 0, \ldots, n+1$.
\subsection{The linear barycentric rational Lagrange interpolation}
\label{subsec:TBLI}
A fast and efficient variant of Lagrange interpolation is the linear barycentric rational Lagrange interpolation, which gained much attention in recent years \cite{Berrut2004,Higham2004,Berrut2005,Wang2014a,Berrut2014}. The barycentric formula of the Lagrange interpolating polynomial is defined by
\begin{equation}\label{eq:Bary1}
	{\cal L}_{B,n,i}^{(\alpha )}(x) = \frac{{\xi _{n,i}^{(\alpha )}}}{{x - x_{n,i}^{(\alpha )}}}/\sum\limits_{j = 0}^n {\frac{{\xi _{n,j}^{(\alpha )}}}{{x - x_{n,j}^{(\alpha )}}}} ,\quad i = 0, \ldots ,n,
\end{equation}
where
\begin{equation}\label{eq:Bweights1}
	\xi _{n,j}^{(\alpha )} = \frac{1}{{\prod\nolimits_{\scriptstyle i = 0\hfill\atop
\scriptstyle i \ne j\hfill}^n {\left(x_{n,j}^{(\alpha )} - x_{n,i}^{(\alpha )}\right)} }}, \quad j = 0, \ldots ,n,
\end{equation}
are the barycentric weights. The barycentric formula for ${P_n}f$ is therefore defined by
\begin{equation}\label{sec:ort:eq:LagintB1}
	{P_{B,n}}f(x) = \sum\limits_{i = 0}^n {{f_{n,i}^{(\alpha)}}\,\mathcal{L} _{B,n,i}^{(\alpha )}(x)}.
\end{equation}
The linear barycentric rational Lagrange interpolation enjoys several advantages, which makes it very efficient in practice: (i) The barycentric Lagrange interpolating polynomials of Eq. \eqref{eq:Bary1} are scale-invariant; thus avoid any problems of underflow and overflow \cite{Berrut2004}. (ii) They are forward stable for Gauss sets of interpolating points with slowly growing Lebesgue constant \cite{Higham2004}. 
(iii) Once the weights are computed, the interpolant at any point $x$ will take only $O(n)$ floating point operations to compute; cf. \cite{Kopriva2009,Gander2005,Berrut2004}. \cite{Berrut2004} have further considered the barycentric Lagrange interpolation to be the `\textit{standard method of polynomial interpolation}.'

   Despite the pleasant features of the barycentric formula discussed above, the direct calculation of the barycentric weights using Eq. \eqref{eq:Bweights1} suffers from significant numerical errors when the number of interpolating points is large, since the differences $\left(x_{n,j}^{(\alpha )} - x_{n,i}^{(\alpha )}\right)$ appearing in the denominator are subject to floating-point cancellation errors for large $n$. Fortunately, the recent works of \cite{Wang2012} and \cite{Wang2014} showed that the barycentric weights for the Gauss points can be expressed explicitly in terms of the corresponding quadrature weights for classical orthogonal polynomials. In particular, for the Gegenbauer polynomials considered in this work, we have the following theorem, which mitigates the harm of cancellation error arising in Eq. \eqref{eq:Bweights1}.
\begin{thm}[\cite{Wang2014}]\label{thm:Wang2014}
The barycentric weights for the GG points are given by
\begin{equation}\label{eq:BGG1}
	\xi _{n,i}^{(\alpha )} = {( - 1)^i} \sqrt {\left( {1 - {{\left( {x_{n,i}^{(\alpha )}} \right)}^2}} \right)\,\varpi _{n,i}^{(\alpha )}},\quad i = 0, \ldots ,n,
\end{equation}
where $\left\{ {x_{n,i}^{(\alpha )},\varpi _{n,i}^{(\alpha )}} \right\}_{i = 0}^n$ is the set of GG points and quadrature weights, respectively.
\end{thm}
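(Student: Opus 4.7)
The plan is to connect the two sides of \eqref{eq:BGG1} through a common quantity, namely the derivative $G_{n+1}^{(\alpha)\,\prime}(x_{n,i}^{(\alpha)})$ of the Gegenbauer polynomial whose zeros are the GG nodes; each of $\xi_{n,i}^{(\alpha)}$ and $\varpi_{n,i}^{(\alpha)}$ admits a natural closed form in terms of this derivative, and the identity drops out by elimination.

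First, I would rewrite $\xi_{n,i}^{(\alpha)}$ in closed form. Letting $K_{n+1}^{(\alpha)}$ denote the leading coefficient of $G_{n+1}^{(\alpha)}$ (which equals $2^{n+1}\Gamma(n+1+\alpha)/[(n+1)!\,\Gamma(\alpha)]$), the factorisation $G_{n+1}^{(\alpha)}(x) = K_{n+1}^{(\alpha)}\prod_{j=0}^{n}(x-x_{n,j}^{(\alpha)})$, together with the fact that each $x_{n,i}^{(\alpha)}$ is a simple root, gives
\[
G_{n+1}^{(\alpha)\,\prime}\!\bigl(x_{n,i}^{(\alpha)}\bigr) \;=\; K_{n+1}^{(\alpha)}\,\prod_{\substack{j=0\\ j\neq i}}^{n}\bigl(x_{n,i}^{(\alpha)} - x_{n,j}^{(\alpha)}\bigr),
\]
so by \eqref{eq:Bweights1}, $\xi_{n,i}^{(\alpha)} = K_{n+1}^{(\alpha)}/G_{n+1}^{(\alpha)\,\prime}(x_{n,i}^{(\alpha)})$.

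Next I would bring in a parallel closed form for the Christoffel number, namely the classical Gauss-quadrature identity
\[
\varpi_{n,i}^{(\alpha)} \;=\; \frac{B_n^{(\alpha)}}{\bigl(1-(x_{n,i}^{(\alpha)})^2\bigr)\,\bigl[G_{n+1}^{(\alpha)\,\prime}(x_{n,i}^{(\alpha)})\bigr]^2},
\]
with an $i$-independent constant $B_n^{(\alpha)}$ depending only on $n$ and $\alpha$. This follows from the self-representation $\varpi_{n,i}^{(\alpha)} = \int_{-1}^{1}[\mathcal{L}_{n,i}^{(\alpha)}(x)]^2\,w^{(\alpha)}(x)\,dx$ combined with $\mathcal{L}_{n,i}^{(\alpha)}(x) = G_{n+1}^{(\alpha)}(x)/\bigl[G_{n+1}^{(\alpha)\,\prime}(x_{n,i}^{(\alpha)})(x-x_{n,i}^{(\alpha)})\bigr]$, where the factor $(1-(x_{n,i}^{(\alpha)})^2)^{-1}$ is extracted by exploiting the singular Sturm-Liouville equation $(1-x^2)y''-(2\alpha+1)xy'+(n+1)(n+1+2\alpha)y=0$ satisfied by $G_{n+1}^{(\alpha)}$, together with the normalisation \eqref{sec:pre:eq:normak1}. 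Squaring the first display and combining with the second gives $(\xi_{n,i}^{(\alpha)})^2 = \bigl[(K_{n+1}^{(\alpha)})^2/B_n^{(\alpha)}\bigr]\,(1-(x_{n,i}^{(\alpha)})^2)\,\varpi_{n,i}^{(\alpha)}$, and the constant $(K_{n+1}^{(\alpha)})^2/B_n^{(\alpha)}$ collapses to $1$ after routine $\Gamma$-function manipulations, yielding $|\xi_{n,i}^{(\alpha)}| = \sqrt{(1-(x_{n,i}^{(\alpha)})^2)\,\varpi_{n,i}^{(\alpha)}}$. The $(-1)^i$ sign then follows from labelling the GG nodes in decreasing order and noting that $G_{n+1}^{(\alpha)\,\prime}$ alternates sign between consecutive zeros, being positive at the largest.

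The principal obstacle is the constant-matching identity $(K_{n+1}^{(\alpha)})^2 = B_n^{(\alpha)}$; the rest is structural. Verifying it requires careful bookkeeping of the leading coefficient $K_{n+1}^{(\alpha)}$ together with the normalisation $\lambda_{n+1}^{(\alpha)}$ from \eqref{sec:pre:eq:normak1} and the constant produced by the integral representation of $\varpi_{n,i}^{(\alpha)}$ after the Sturm-Liouville reduction. This is a pure $\Gamma$-function computation, but is precisely where the argument could go astray if constants are tracked sloppily.
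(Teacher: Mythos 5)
The paper offers no proof of this statement at all: it is quoted verbatim from Wang, Huybrechs and Vandewalle (2014), so there is no internal argument to compare yours against. Your overall strategy is the standard one behind that reference --- write $\xi_{n,i}^{(\alpha)} = K_{n+1}^{(\alpha)}/G_{n+1}^{(\alpha)\,\prime}(x_{n,i}^{(\alpha)})$ from the factorisation of $G_{n+1}^{(\alpha)}$, invoke the classical Christoffel-number formula $\varpi_{n,i}^{(\alpha)} = B_n^{(\alpha)}\bigl[(1-(x_{n,i}^{(\alpha)})^2)(G_{n+1}^{(\alpha)\,\prime}(x_{n,i}^{(\alpha)}))^2\bigr]^{-1}$, and eliminate the derivative --- and both of those ingredients are correct.

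The gap is exactly where you suspected it might be: the constant $(K_{n+1}^{(\alpha)})^2/B_n^{(\alpha)}$ does \emph{not} collapse to $1$, and no amount of $\Gamma$-function bookkeeping will make it do so. A quick sanity check in the Legendre case ($\alpha = 1/2$) already kills it: there $B_n^{(1/2)} = 2$ while $K_{n+1}^{(1/2)} = 2^{-(n+1)}\binom{2n+2}{n+1}$, so the ratio $K_{n+1}^{(1/2)}/\sqrt{B_n^{(1/2)}}$ grows roughly like $2^{n}/\sqrt{\pi n}$; similarly, for Chebyshev nodes the product-formula weights carry a factor $2^{n}/(n+1)$ that $\sqrt{(1-x_i^2)\,\pi/(n+1)}$ does not. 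What your elimination actually proves is that $\xi_{n,i}^{(\alpha)} = c_n^{(\alpha)}\,(-1)^i\sqrt{(1-(x_{n,i}^{(\alpha)})^2)\,\varpi_{n,i}^{(\alpha)}}$ for some nonzero constant $c_n^{(\alpha)}$ independent of $i$ --- and that is in fact how Wang et al.\ state and intend the result, since barycentric weights are only ever used in the quotient \eqref{eq:Bary1}, which is invariant under rescaling all $\xi_{n,i}^{(\alpha)}$ by a common factor. So the correct fix is not a cleverer constant computation but an explicit remark that \eqref{eq:BGG1} holds modulo an $i$-independent normalisation, which is harmless for every use made of it in this paper (and likewise for the $\sin(\cos^{-1}(\cdot))$ variant in Theorem \ref{thm:Kimo2016}). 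Your sign argument via the alternation of $G_{n+1}^{(\alpha)\,\prime}$ between consecutive zeros is fine, again up to the overall sign absorbed into that same constant.
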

Based on a fast $O(n)$ algorithm for the computation of Gaussian quadrature due to \cite{Hale2013}, Theorem \ref{thm:Wang2014} leads to an $O(n)$ computational scheme for the barycentric weights. A MATLAB code for the GG points and quadrature weights can be found in Chebfun ``jacpts'' function; cf. \cite{Trefethen2011}. 

So far, the reader may expect that cancellation errors arising in the calculation of the barycentric weights are eliminated by using the numerically more stable formula \eqref{eq:BGG1}, but the story does not end here. Recall that the GG points cluster quadratically near the endpoints $\pm 1$ as $n \to \infty$; in addition, the positive GG points increase monotonically when $\alpha$ decreases. 
Therefore, Formula \eqref{eq:BGG1} may still suffer from cancellation effects. Fortunately, it is possible in this case to modify the computation and avoid cancellation by introducing the useful transformation $x = \cos(\theta)$, as stated by the following theorem, which gives a more numerically stable expression for calculating the barycentric weights.
\begin{thm}\label{thm:Kimo2016}
The barycentric weights for the GG points are given by
\begin{equation}\label{eq:BGG1Mod1}
\xi _{n,i}^{(\alpha )} = {( - 1)^i}\,\sin \left( {{{\cos }^{ - 1}}\left( {x_{n,i}^{(\alpha )}} \right)} \right)\sqrt {{\mkern 1mu} \varpi _{n,i}^{(\alpha )}} ,\quad i = 0, \ldots ,n.
\end{equation}
\end{thm}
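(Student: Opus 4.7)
The plan is to reduce Theorem \ref{thm:Kimo2016} to Theorem \ref{thm:Wang2014} via the trigonometric substitution $x = \cos\theta$. Concretely, I would fix an index $i \in \{0,\ldots,n\}$ and start from the Wang--Trefethen identity \eqref{eq:BGG1},
$$\xi_{n,i}^{(\alpha)} = (-1)^i \sqrt{\bigl(1 - (x_{n,i}^{(\alpha)})^2\bigr)\,\varpi_{n,i}^{(\alpha)}}.$$
Since the GG nodes are the zeros of $G_{n+1}^{(\alpha)}$ and therefore lie strictly inside $(-1,1)$, the principal arc-cosine $\theta_{n,i}^{(\alpha)} := \cos^{-1}(x_{n,i}^{(\alpha)})$ is well defined and lies in $(0,\pi)$, with no ambiguity of branch.

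Next I would apply the Pythagorean identity $1 - \cos^2\theta = \sin^2\theta$ to obtain $1 - (x_{n,i}^{(\alpha)})^2 = \sin^2(\theta_{n,i}^{(\alpha)})$. Because $\sin\theta > 0$ on $(0,\pi)$, the principal square root satisfies $\sqrt{\,1-(x_{n,i}^{(\alpha)})^2\,} = \sin(\theta_{n,i}^{(\alpha)})$ without a sign correction. The Christoffel numbers $\varpi_{n,i}^{(\alpha)}$ are positive (a standard property of Gauss quadrature weights), so the factor $\sin(\theta_{n,i}^{(\alpha)})$ can be pulled out of the square root cleanly, giving
$$\xi_{n,i}^{(\alpha)} = (-1)^i \sin\!\bigl(\cos^{-1}(x_{n,i}^{(\alpha)})\bigr)\sqrt{\,\varpi_{n,i}^{(\alpha)}\,},$$
which is precisely \eqref{eq:BGG1Mod1}.

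Strictly as an algebraic identity, the theorem is essentially a one-line rewriting of Theorem \ref{thm:Wang2014}, and the only delicate point is ruling out the opposite sign on the square root, which is settled by the confinement $\theta_{n,i}^{(\alpha)} \in (0,\pi)$. The genuine content is numerical rather than analytical: for indices $i$ near $0$ or $n$, the nodes $x_{n,i}^{(\alpha)}$ approach $\pm 1$ quadratically, so forming $1-(x_{n,i}^{(\alpha)})^2$ in finite precision suffers catastrophic cancellation, whereas evaluating $\sin(\cos^{-1}(x_{n,i}^{(\alpha)}))$ (or, better still, $\sin(\theta_{n,i}^{(\alpha)})$ whenever the angles are available directly from the underlying GG--point routine) preserves full relative accuracy. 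I would therefore close the proof with a brief remark recording this motivation, since it is the entire reason for stating \eqref{eq:BGG1Mod1} as an alternative to \eqref{eq:BGG1}.
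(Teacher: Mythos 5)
Your proposal is correct and follows exactly the route the paper intends: the paper offers no explicit proof but introduces the theorem as a rewriting of Theorem \ref{thm:Wang2014} via the transformation $x=\cos\theta$, which is precisely your argument. Your attention to the branch of $\cos^{-1}$, the positivity of $\sin$ on $(0,\pi)$, and the positivity of the Christoffel numbers makes the one-line identity rigorous, and your closing remark on cancellation matches the paper's stated motivation.
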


\section{The barycentric GIM and quadrature}
\label{sec:BGIMAQ}
In many problems and applications, one needs to convert the integral equations involved in the mathematical models into algebraic equations. Such procedures also require some expressions for approximating the integral operators involved in the integral equations. In a Gegenbauer collocation method, this operation is conveniently carried out through the GIM; cf. \cite{Elgindy2013a,Elgindy2013b,Elgindy2012c,Elgindy2012d,Elgindy2013,Elgindy2016}. The GIM is simply a linear map which takes a vector of $n$ function values $f(x_i)$ to a vector of $n$ integral values $\int_{ - 1}^{{x_i}} {f(x)\,dx}$, for a certain set of integration nodes ${\{x_i\}_{i=0}^n}$. It represents an easy, stable, and efficient numerical integration operator for approximating the definite integrals of the function $f(x)$ on the intervals $[-1, x_i], i = 0, \ldots, n$, which frequently arise in collocating integral equations, integro-differential equation, ordinary and partial differential equations, optimal control problems, etc.; cf. \cite{Elgindy2009,Elgindy2013a,Elgindy2012c,Elgindy2013,Elgindy2012d,Elgindy2016}. One way to achieve such approximations was designed by \cite{Elgindy2013} via integrating the orthogonal Gegenbauer interpolant \eqref{sec:ort:eq:Lagint1}, and the sought definite integration approximations can be simply expressed in a matrix-vector multiplication; cf. \cite[Theorem 2.1]{Elgindy2013}. \cite{Elgindy2013} have further introduced a method for optimally constructing a rectangular GIM by minimizing the magnitude of the quadrature error in some optimality sense; cf. \cite[Theorem 2.2]{Elgindy2013}. In the sequel, we present a novel numerical scheme considered an improvement over the work of \cite{Elgindy2013} for constructing the GIM and its associated quadrature through the stable barycentric representation of Lagrange interpolating polynomials and the explicit barycentric weights for the GG points.

To construct the barycentric GIM and quadrature, we integrate the orthogonal barycentric Gegenbauer interpolant \eqref{sec:ort:eq:LagintB1} on $[-1, {x_{n,j}^{(\alpha )}}]$, for each $j$ so that
\begin{equation}\label{eq:defBi1}
	\int_{ - 1}^{x_{n,j}^{(\alpha )}} {{P_{B,n}}f(x)\,dx}  = \sum\limits_{i = 0}^n {f_{n,i}^{(\alpha )}{\mkern 1mu} \int_{ - 1}^{x_{n,j}^{(\alpha )}} {{\cal L}_{B,n,i}^{(\alpha )}(x)\,dx} }, \quad j = 0, \ldots, n.
\end{equation}
Introducing the change of variable
\begin{equation}\label{eq:cov1}
	x = \frac{1}{2}\left( {\left( {x_{n,j}^{(\alpha )} + 1} \right)\,t + x_{n,j}^{(\alpha )} - 1} \right),
\end{equation}
allows us to rewrite the definite integrals \eqref{eq:defBi1} further as
\begin{equation}
	\int_{ - 1}^{x_{n,j}^{(\alpha )}} {{P_{B,n}}f(x)\,dx}  = \frac{{x_{n,j}^{(\alpha )} + 1}}{2}\sum\limits_{i = 0}^n {f_{n,i}^{(\alpha )}{\mkern 1mu} \int_{ - 1}^1 {{\cal L}_{B,n,i}^{(\alpha )}\left( {t; - 1,x_{n,j}^{(\alpha )}} \right)\,dt} } ,\quad j = 0, \ldots ,n.
\end{equation}
Since the barycentric Lagrange interpolating polynomials ${{\cal L}_{B,n,i}^{(\alpha )}\left( {t; - 1,x_{n,j}^{(\alpha )}} \right)}$ are polynomials of degree less than or equal to $n$, the integrals $\int_{ - 1}^1 {{\cal L}_{B,n,i}^{(\alpha )}\left( {t; - 1,x_{n,j}^{(\alpha )}} \right)\,dt}$ can be computed exactly using an $\left\lceil {(n + 1)/2} \right\rceil $-point Legendre-Gauss (LG) quadrature, where $\left\lceil  \cdot  \right\rceil$ denotes the ceiling function. In particular, let $N = \left\lceil {(n - 1)/2} \right\rceil, \left\{ {x_{N,k}^{(0.5)}} \right\}_{k = 0}^N$ be the zeros of the $(N+1)$th-degree Legendre polynomial, $L_{N+1}(t)$, and $\left\{ {\varpi _{N,k}^{(0.5)}} \right\}_{k = 0}^N$ be the LG weights defined by
\begin{equation}
	{\varpi _{N,k}^{(0.5)}} = \frac{2}{{\left( {1 - {{\left( {{x_{N,k}^{(0.5)}}} \right)}^2}} \right)\,{{\left[ {{L'_{N + 1}}\left({x_{N,k}^{(0.5)}}\right)} \right]}^2}}},\quad k = 0, \ldots ,N,
\end{equation}
where ${L'_{N + 1}}$ denotes the derivative of ${{{L}_{N + 1}}}$. Then,
\begin{equation}
	\int_{ - 1}^1 {{\cal L}_{B,n,i}^{(\alpha )}\left( {t; - 1,x_{n,j}^{(\alpha )}} \right)\,dt}  = \sum\limits_{k = 0}^N {{\varpi _{N,k}^{(0.5)}}\,{\cal L}_{B,n,i}^{(\alpha )}\left( {{x_{N,k}^{(0.5)}}; - 1,x_{n,j}^{(\alpha )}} \right)} .
\end{equation}
Hence,
\begin{equation}\label{eq:pbquad1}
	\int_{ - 1}^{x_{n,j}^{(\alpha )}} {{P_{B,n}}f(x)\,dx}  = \sum\limits_{i = 0}^n {{p_{B,j,i}^{(1)}}\,f_{n,i}^{(\alpha )}} ,\quad j = 0, \ldots ,n,
\end{equation}
where $p_{B,j,i}^{(1)}, i,j = 0, \ldots ,n$, are the elements of the first-order barycentric GIM given by
\begin{equation}\label{eq:pbGIMe1}
	{p_{B,j,i}^{(1)}} = \frac{{x_{n,j}^{(\alpha )} + 1}}{2}\sum\limits_{k = 0}^N {\;{\varpi _{N,k}^{(0.5)}}\,{\cal L}_{B,n,i}^{(\alpha )}\left( {{x_{N,k}^{(0.5)}}; - 1,x_{n,j}^{(\alpha )}} \right)} ,\quad i,j = 0, \ldots ,n.
\end{equation}
Eqs. \eqref{eq:pbquad1} provide the values of the barycentric Gegenbauer quadrature on the intervals $\left[-1, x_{n,j}^{(\alpha)}\right], j = 0, \ldots, n$, and can be further written in matrix notation as
\begin{equation}\label{eq:pbquadmat1}
{\mathbf{I}}_n^{(\alpha )} = {\mathbf{P}}_B^{(1)}\,{\mathbf{F}},	
\end{equation}
where ${\mathbf{I}}_n^{(\alpha )} = {\left( {\int_{ - 1}^{x_{n,0}^{(\alpha )}} {{P_{B,n}}f(x)\,dx} ,\int_{ - 1}^{x_{n,1}^{(\alpha )}} {{P_{B,n}}f(x)\,dx} , \ldots ,\int_{ - 1}^{x_{n,n}^{(\alpha )}} {{P_{B,n}}f(x)\,dx} } \right)^T},\,{\mathbf{F}} = {\left( {f_{n,0}^{(\alpha )},f_{n,1}^{(\alpha )}, \ldots ,f_{n,n}^{(\alpha )}} \right)^T}$, and ${\mathbf{P}}_B^{(1)} = \left( {{p_{B,j,i}^{(1)}}} \right)$, $i,j = 0, \ldots ,n$ is the first-order barycentric GIM. Clearly, ${\mathbf{P}}_B^{(1)}$ is a square matrix of size $(n + 1)$. Notice also that the barycentric Chebyshev and Legendre matrices can be directly recovered by setting $\alpha = 0; 0.5$, respectively.

Similar to the works of \cite{Elgindy2013} and \cite{Elgindy2016}, the $q$th-order barycentric GIM can be directly generated from the first-order barycentric GIM by the following formulas
\begin{equation}\label{eq:kok111}
	p_{B,j,i}^{(q)} = \frac{{{{\left(x_{n,j}^{(\alpha )} - x_{n,i}^{(\alpha )}\right)}^{q - 1}}}}{{(q - 1)!}}p_{B,j,i}^{(1)},\quad i,j = 0, \ldots ,n,
\end{equation}
or in matrix form,
\begin{equation}\label{eq:kok222}
{{\mathbf{P}}_B}^{(q)} = {\mkern 1mu} \frac{1}{{(q - 1)!}}\left( {\left( {{\mathbf{x}}_n^{(\alpha )} \otimes {{\mathbf{J}}_{1,n + 1}}} \right) - \left( {{{\left( {{\mathbf{x}}_n^{(\alpha )}} \right)}^T} \otimes {{\mathbf{J}}_{n + 1,1}}} \right)} \right)_{(q-1)} \circ {{\mathbf{P}}_B}^{(1)},
\end{equation}
where ${\mathbf{x}}_n^{(\alpha )} = {[x_{n,0}^{(\alpha )},x_{n,1}^{(\alpha )}, \ldots ,x_{n,n}^{(\alpha )},]^T}, {\mathbf{\mathcal{A}}_{(m)}} = \underbrace {\mathbf{\mathcal{A}} \circ \mathbf{\mathcal{A}} \circ  \ldots  \circ \mathbf{\mathcal{A}}}_{m - {\text{times}}}$, for any $\mathbf{\mathcal{A}} \in {\mathbb{R}^{l \times l}}, m, l \in {\mathbb{Z}^ + }, {{\mathbf{J}}_{i,j}}$ is the all ones matrix of size $i \times j$, ``$\otimes$'' and ``$\circ$'' denote the Kronecker product and Hadamard product (entrywise product), respectively; cf. \cite[Eq. (4.42)]{Elgindy2016}. On the interval $[0, 1]$, the R.H.S. of each of Eqs. \eqref{eq:kok111} and \eqref{eq:kok222} is divided by $2^q$.
\subsection{Error and Convergence Analysis}
\label{subsec:EACA1}
Since the barycentric formula of the Lagrange interpolating polynomial \eqref{eq:Bary1} is mathematically equivalent to the standard Lagrange interpolating polynomials defined by \eqref{sec:ort:eq:Lag1}, the established barycentric GIM and quadrature share the same order of error and convergence properties of the GIM and quadrature developed by \cite{Elgindy2013}; therefore, the following theorem is straightforward.
\begin{thm}\label{sec1:thm:elhawarykareem1}
Let $\mathbb{S}_n^{(\alpha )} = \left\{ {x_{n,j}^{(\alpha )},\;j = 0, \ldots ,n} \right\}, n \in \mathbb{Z}^+$, be the set of GG points. Moreover, let $f(x) \in C^{n+1}[-1, 1]$ be approximated by the barycentric Gegenbauer expansion series \eqref{sec:ort:eq:LagintB1}. Then there exist some numbers $\zeta_{n,j}^{(\alpha)} \in [-1, 1], j = 0, \ldots, n$ such that
\begin{equation}\label{sec1:eq:ultraint2kimohat}
\int_{ - 1}^{{x_{n,j}^{(\alpha )}}} {f(x)  dx}  = \sum\limits_{i = 0}^n {p_{B,j,i}^{(1)}} {f_{n,i}^{(\alpha )}} + E_n^{(\alpha )}\left({x_{n,j}^{(\alpha )}},{\zeta_{n,j}^{(\alpha)}}\right)\, \forall x_{n,j}^{(\alpha )} \in \mathbb{S}_n^{(\alpha )},
\end{equation}
where ${p_{B,j,i}^{(1)}}, i,j = 0, \ldots ,n$ are the elements of the first-order barycentric GIM, ${\mathbf{P}}_B^{(1)}$, as defined by Eqs. \eqref{eq:pbGIMe1},
\begin{equation}\label{sec1:eq:errorkimohat}
E_n^{(\alpha )}\left({{x_{n,j}^{(\alpha )}}},{\zeta_{n,j}^{(\alpha)}}\right) = \frac{{{f^{(n + 1)}}\left({\zeta_{n,j}^{(\alpha)}}\right)}}{{(n + 1)!  K_{n + 1}^{(\alpha )}}}  \int_{ - 1}^{{x_{n,j}^{(\alpha )}}} {G_{n + 1}^{(\alpha )}(x)\, dx},
\end{equation}
is the Gegenbauer quadrature error term, and $K_n^{(\alpha)}$ is the leading coefficient of the $n$th-degree Gegenbauer polynomial $G_n^{(\alpha)}(x)$ as defined by \cite[Eq. (A.8)]{Elgindy2013}.
\end{thm}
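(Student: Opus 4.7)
The strategy is to reduce the claim to the error formula already established for the non-barycentric GIM in \cite[Theorem 2.1]{Elgindy2013}, by showing that the barycentric and classical Lagrange representations yield the same interpolating polynomial, and hence the same quadrature on each interval $\left[-1,x_{n,j}^{(\alpha)}\right]$.

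The first step is to verify the polynomial identity $P_{B,n}f \equiv P_n f$ on $[-1,1]$. Both functions are polynomials of degree at most $n$ and both interpolate $f$ at the $n+1$ distinct GG nodes $x_{n,k}^{(\alpha)}$, so uniqueness of polynomial interpolation forces them to coincide. Integrating this identity on $\left[-1,x_{n,j}^{(\alpha)}\right]$ and comparing with the defining relation \eqref{eq:pbquad1} yields
\[
\sum_{i=0}^n p_{B,j,i}^{(1)}\, f_{n,i}^{(\alpha)} \;=\; \int_{-1}^{x_{n,j}^{(\alpha)}} P_n f(x)\, dx,
\]
so the barycentric and classical quadratures return numerically identical values on the GG grid. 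Subtracting from $\int_{-1}^{x_{n,j}^{(\alpha)}} f(x)\,dx$ expresses the quadrature error as
\[
\int_{-1}^{x_{n,j}^{(\alpha)}} \bigl(f(x) - P_n f(x)\bigr)\,dx,
\]
into which I would substitute the classical Lagrange remainder $f(x) - P_n f(x) = \frac{f^{(n+1)}(\eta(x))}{(n+1)!}\prod_{k=0}^n \left(x-x_{n,k}^{(\alpha)}\right)$ together with the factorization $\prod_{k=0}^n \left(x - x_{n,k}^{(\alpha)}\right) = G_{n+1}^{(\alpha)}(x)/K_{n+1}^{(\alpha)}$; the latter is immediate because the GG nodes are the $n+1$ roots of $G_{n+1}^{(\alpha)}$ whose leading coefficient is $K_{n+1}^{(\alpha)}$.

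The only non-mechanical step is the extraction of $f^{(n+1)}\!\left(\zeta_{n,j}^{(\alpha)}\right)$ from under the resulting integral against $G_{n+1}^{(\alpha)}$; this is the main obstacle, because $G_{n+1}^{(\alpha)}$ changes sign on $\left[-1,x_{n,j}^{(\alpha)}\right]$ in general, so the weighted integral mean value theorem is not directly applicable. The route used in \cite[Theorem 2.1]{Elgindy2013} is to form the ratio of the error integral to $\int_{-1}^{x_{n,j}^{(\alpha)}} G_{n+1}^{(\alpha)}(x)\,dx$, argue via the continuity of $f^{(n+1)}\circ\eta$ on $[-1,1]$ that this ratio lies in the range of $f^{(n+1)}$, and then produce $\zeta_{n,j}^{(\alpha)} \in [-1,1]$ by the intermediate value theorem. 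Since this reasoning depends only on the polynomial identity $P_{B,n}f = P_n f$ together with the classical zero structure of $G_{n+1}^{(\alpha)}$, and is insensitive to the representation chosen for the Lagrange basis, the argument of \cite[Theorem 2.1]{Elgindy2013} transfers without change to the barycentric setting and yields \eqref{sec1:eq:ultraint2kimohat}--\eqref{sec1:eq:errorkimohat}.
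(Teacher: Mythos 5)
Your proposal is correct and follows essentially the same route as the paper, which simply observes that the barycentric representation \eqref{eq:Bary1} is mathematically equivalent to the classical Lagrange form \eqref{sec:ort:eq:Lag1} and therefore inherits the error formula of \cite[Theorem 2.1]{Elgindy2013} verbatim. You merely make explicit the steps the paper leaves implicit (uniqueness of the degree-$n$ interpolant at the GG nodes, the factorization $\prod_{k}\left(x-x_{n,k}^{(\alpha)}\right)=G_{n+1}^{(\alpha)}(x)/K_{n+1}^{(\alpha)}$, and the deferral of the mean-value extraction to the cited theorem), which is a faithful elaboration rather than a different argument.
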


The following theorem gives the error bounds of the barycentric Gegenbauer quadrature.
\begin{thm}[Error bounds]\label{sec1:thm:krooma1}
Assume that $f(x) \in C^{n+1}[-1, 1]$, and ${\left\| {{f^{(n + 1)}}} \right\|_{{L^\infty }[-1,1]}} \le A \in {\mathbb{R}^ + }$, for some number $n \in \mathbb{Z}_0^+$, where the constant $A$ is independent of $n$. Moreover, let $\int_{-1}^{{x_{n,j}^{(\alpha)}}} {f(x)\,dx}$, be approximated by the barycentric Gegenbauer quadrature \eqref{eq:pbquad1} up to the $(n+1)$th Gegenbauer quadrature expansion term, for each node $x_{n,j}^{(\alpha)}, j = 0, \ldots, n$. Then there exist some positive constants $D_1^{(\alpha)}$ and $D_2^{(\alpha)}$, independent of $n$ such that the truncation error of the barycentric Gegenbauer quadrature, ${E_n^{(\alpha )}\left( {x_{n,j}^{(\alpha )},\zeta _{n,j}^{(\alpha )}} \right)}$, is bounded by the following inequalities:
\begin{empheq}[left={\left| {E_n^{(\alpha )}\left( {x_{n,j}^{(\alpha )},\zeta _{n,j}^{(\alpha )}} \right)} \right| \le}\empheqlbrace]{align}
&{\frac{{A{2^{ - n}} \left( {x_{n,j}^{(\alpha )} + 1} \right) \Gamma \left( {\alpha  + 1} \right) \Gamma \left( {n + 2\alpha  + 1} \right)}}{{\Gamma \left( {2\alpha  + 1} \right)\Gamma \left( {n + 2} \right)\Gamma \left( {n + \alpha  + 1} \right)}},\quad n \ge 0 \wedge \alpha  \ge 0,}\\
	&{\frac{{A{2^{ - n - 1}}\left( {x_{n,j}^{(\alpha )} + 1}  \right) \Gamma \left( \alpha  \right)}}{{\Gamma \left( {n + \alpha  + 1} \right)}} {\left( {\begin{array}{*{20}{c}}
	{\frac{{n - 1}}{2} + \alpha }\\
	{\frac{{n + 1}}{2}}
	\end{array}} \right)},\quad \frac{{n + 1}}{2} \in {\mathbb{Z}^ + } \wedge  - \frac{1}{2} < \alpha  < 0,}
\end{empheq}
\begin{equation}
\left| {E_n^{(\alpha )}\left( {x_{n,j}^{(\alpha )},\zeta _{n,j}^{(\alpha )}} \right)} \right| < {\frac{{A{2^{ - n}} \left( {x_{n,j}^{(\alpha )} + 1} \right) \Gamma \left( \alpha + 1 \right) }}{{\sqrt {\left( {n + 1} \right)\left( {2\alpha  + n + 1} \right)} \Gamma \left( {n + \alpha  + 1} \right)}} {\left( {\begin{array}{*{20}{c}}
	{\frac{n}{2} + \alpha }\\
	{\frac{n}{2}}
	\end{array}} \right)},\quad \frac{n}{2} \in \mathbb{Z}_0^ +  \wedge  - \frac{1}{2} < \alpha  < 0},
\end{equation}
%
Moreover, as $n \to \infty$, the truncation error of the barycentric Gegenbauer quadrature is asymptotically bounded by
\begin{empheq}[left={\left| {E_n^{(\alpha )}\left( {x_{n,j}^{(\alpha )},\zeta _{n,j}^{(\alpha )}} \right)} \right| \simlteq}\empheqlbrace]{align}
&B_1^{(\alpha )}{\left( {\frac{e}{2}} \right)^n}\left( {x_{n,j}^{(\alpha )} + 1} \right){n^{\alpha  - n - \frac{3}{2}}},\quad \alpha  \ge 0,\\
&B_2^{(\alpha )}{\left( {\frac{e}{2}} \right)^n}\left( {x_{n,j}^{(\alpha )} + 1} \right){n^{ - n - \frac{3}{2}}},\quad - \frac{1}{2} < \alpha  < 0,
\end{empheq}
for all $j = 0, \ldots, n$, where $B_1^{(\alpha)} = A D_1^{(\alpha)}$ and $B_2^{(\alpha)} = B_1^{(\alpha)} D_2^{(\alpha)}$.
\end{thm}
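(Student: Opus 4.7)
My plan is to start from the Gegenbauer quadrature error representation \eqref{sec1:eq:errorkimohat} and pass to absolute values. First, I would use the hypothesis $\|f^{(n+1)}\|_{L^\infty[-1,1]} \leq A$ to bound the derivative factor by $A$, and substitute the closed form $K_{n+1}^{(\alpha)} = 2^{n+1}\Gamma(n+\alpha+1)/[(n+1)!\,\Gamma(\alpha)]$ from Eq.~(A.8) of \cite{Elgindy2013}. This reduces the problem to estimating the single integral $|\int_{-1}^{x_{n,j}^{(\alpha)}} G_{n+1}^{(\alpha)}(x)\,dx|$ and simplifying the resulting Gamma ratios; the identities $\Gamma(\alpha)=\Gamma(\alpha+1)/\alpha$ and $\Gamma(2\alpha)=\Gamma(2\alpha+1)/(2\alpha)$ collapse $\Gamma(\alpha)/\Gamma(2\alpha)$ into $2\,\Gamma(\alpha+1)/\Gamma(2\alpha+1)$, matching exactly the shape of the first inequality.

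The three cases will then be distinguished by how sharply the Gegenbauer integral can be controlled. For $\alpha \geq 0$, the trivial bound $|\int_{-1}^x G_{n+1}^{(\alpha)}(s)\,ds| \leq (x+1)\,G_{n+1}^{(\alpha)}(1)$ combined with the classical value $G_m^{(\alpha)}(1) = \Gamma(m+2\alpha)/[m!\,\Gamma(2\alpha)]$ yields the first inequality after Gamma simplification. For $-1/2 < \alpha < 0$ with $(n+1)/2 \in \mathbb{Z}^+$ (so $G_{n+1}^{(\alpha)}$ is even), the sup-norm is attained at $x=0$ with $|G_{n+1}^{(\alpha)}(0)| = \Gamma((n+1)/2+\alpha)/[\,|\Gamma(\alpha)|\,((n+1)/2)!]$, giving the second inequality. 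The third case ($n$ even, $-1/2 < \alpha < 0$, so $G_{n+1}^{(\alpha)}$ is odd) will be the main technical obstacle: the trivial sup-norm estimate loses the intrinsic sign cancellation in the integrand and cannot recover the $\sqrt{(n+1)(n+1+2\alpha)}$ improvement. I would therefore apply a weighted Cauchy--Schwarz inequality, $|\int_{-1}^x G_{n+1}^{(\alpha)}|^2 \leq (\int_{-1}^x [w^{(\alpha)}(s)]^{-1}\,ds)(\int_{-1}^x [G_{n+1}^{(\alpha)}]^2 w^{(\alpha)}\,ds) \leq (x+1)\,\lambda_{n+1}^{(\alpha)}$, where $[w^{(\alpha)}(s)]^{-1} = (1-s^2)^{1/2-\alpha} \leq 1$ on $[-1,1]$ for $-1/2 < \alpha < 0$ handles the first factor and the normalization \eqref{sec:pre:eq:normak1} supplies the second; the factor $\sqrt{(n+1)(n+1+2\alpha)}$ should then emerge from repackaging $\Gamma(n+2+2\alpha)/[(n+2)!\,\Gamma^2(\alpha)]$ via $\Gamma(x+1)=x\Gamma(x)$ into the target binomial form.

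The asymptotic bounds finally follow from Stirling's formula $\Gamma(x) \sim \sqrt{2\pi}\,x^{x-1/2}e^{-x}$ applied to the three exact inequalities. In the $\alpha \geq 0$ case, the ratio $\Gamma(n+2\alpha+1)/[\Gamma(n+2)\,\Gamma(n+\alpha+1)]$ collapses at leading order to $e^n\,n^{\alpha-n-3/2}/\sqrt{2\pi}$, so multiplication by $2^{-n}$ produces the claimed $(e/2)^n n^{\alpha-n-3/2}$ rate; the $\alpha$-dependent constant $\Gamma(\alpha+1)/\Gamma(2\alpha+1)$ together with $1/\sqrt{2\pi}$ get absorbed into $D_1^{(\alpha)}$, giving $B_1^{(\alpha)}=A\,D_1^{(\alpha)}$. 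For $-1/2 < \alpha < 0$, a parallel Stirling expansion of the binomial factors, with $(n\pm 1)/2$ treated as $\sim n/2$, removes the $n^\alpha$ contribution and yields the common decay $(e/2)^n n^{-n-3/2}$; the residual $\alpha$-dependent constants are collected into $D_2^{(\alpha)}$, producing $B_2^{(\alpha)} = B_1^{(\alpha)}D_2^{(\alpha)}$ as stated.
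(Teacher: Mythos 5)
The paper itself does not derive these bounds: its proof is a one-line citation to Theorem 4.3 of \cite{Elgindy2016} (which in turn descends from Theorem 2.3 of \cite{Elgindy2013}). Your attempt to rebuild the estimates directly from the error representation \eqref{sec1:eq:errorkimohat} is therefore a genuinely different, self-contained route, and most of it is sound. Bounding $|f^{(n+1)}(\zeta_{n,j}^{(\alpha)})|$ by $A$, inserting the leading coefficient $K_{n+1}^{(\alpha)}$, and estimating $\bigl|\int_{-1}^{x}G_{n+1}^{(\alpha)}(s)\,ds\bigr|\le (x+1)\max_{|s|\le 1}|G_{n+1}^{(\alpha)}(s)|$ with the maximum attained at $s=1$ for $\alpha\ge 0$ and at $s=0$ for even-degree polynomials with $-1/2<\alpha<0$ reproduces the first two inequalities exactly after the $\Gamma(\alpha)/\Gamma(2\alpha)=2\,\Gamma(\alpha+1)/\Gamma(2\alpha+1)$ simplification you describe; the Stirling computation likewise delivers the stated $(e/2)^n n^{\alpha-n-3/2}$ and $(e/2)^n n^{-n-3/2}$ asymptotic rates, including the cancellation of $n^{\alpha}$ in the negative-parameter case.

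The genuine gap is in your third case. The weighted Cauchy--Schwarz step gives $\bigl|\int_{-1}^{x}G_{n+1}^{(\alpha)}(s)\,ds\bigr|\le\sqrt{(x+1)\,\lambda_{n+1}^{(\alpha)}}$, i.e.\ a bound proportional to $(x_{n,j}^{(\alpha)}+1)^{1/2}$, whereas the claimed inequality is proportional to $(x_{n,j}^{(\alpha)}+1)$. For nodes clustering near $-1$ (which the GG nodes do) the claimed bound is strictly smaller than what Cauchy--Schwarz yields, so your estimate does not imply the statement. In addition, $\sqrt{\smash[b]{\lambda_{n+1}^{(\alpha)}}}$ unpacks (via the duplication formula) into a geometric mean of two Gamma ratios rather than the single ratio $\Gamma(n/2+\alpha+1)/\Gamma(n/2+1)$ hidden in the stated binomial coefficient, and a Cauchy--Schwarz argument cannot produce the strict inequality ``$<$''. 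The ingredient actually needed is the classical Szeg\H{o}-type sup-norm estimate for odd-degree ultraspherical polynomials with parameter in $(-1/2,0)$: the maximum of $|G_{n+1}^{(\alpha)}|$ is then attained at the extremum nearest the origin and is \emph{strictly} bounded by an explicit quantity carrying the factor $1/\sqrt{(n+1)(n+2\alpha+1)}$. Multiplying that sup-norm bound by the interval length $(x_{n,j}^{(\alpha)}+1)$, exactly as in your other two cases, gives the third inequality; with that substitution the remainder of your argument goes through.
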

\begin{proof}
The proof follows readily from \cite[Theorem 4.3]{Elgindy2016}.
\end{proof}
Clearly, the barycentric Gegenbauer quadrature converges exponentially exhibiting spectral accuracy, since the error decays at a rate faster than any fixed power in $1/n$. 
%
\section{The optimal barycentric GIM and quadrature}
\label{sec:TOBGAQ1}
To construct optimal barycentric GIM and quadrature, we follow the approach pioneered by \cite{Elgindy2013}, and seek to determine the optimal Gegenbauer parameter $\alpha_j^*$, which minimizes the magnitude of the quadrature error $E_{n}^{(\alpha )}\left(x_{j},\zeta_{j} \right)$, at any arbitrary node $x_j \in [-1, 1]$, for each $j = 0, \ldots, n$. In particular, The values of the optimal Gegenbauer parameters $\alpha_j^*$ can be determined by solving the following one-dimensional minimization problems:
\begin{equation}\label{subsec:opt:reducedprob1}
{\text{Find }}\alpha _j^* = \mathop {{\text{argmin}}}\limits_{\alpha  >  - 1/2} \eta _{j,n}^2(\alpha ),\quad j = 0, \ldots, n,
\end{equation}
where,
\begin{equation}\label{eq:etakk1}
	\eta _{j,n} {(\alpha )} = \frac{{{2^n}}}{{K_{n + 1}^{(\alpha )}}}\int_{-1}^{{x_{j}}} {G_{n + 1}^{(\alpha )}(x)\,dx}.
\end{equation}
Problems (\ref{subsec:opt:reducedprob1}) can be further converted into unconstrained one-dimensional minimization problems using the change of variable defined by \cite[Eq. (4.17)]{Elgindy2016}. Let $z_{m,j,i}^{(\alpha_j^*)}, j = 0, \ldots, n; i = 0, \ldots, m,$ be the adjoint GG nodes as defined by \cite{Elgindy2013}, for some $m \in \mathbb{Z}^+$; i.e. the zeros of the $(m+1)$th-degree Gegenbauer polynomial, ${G_{m + 1}^{(\alpha_j^* )}(x)}$, for each $j$. The following theorem lays the foundation for deriving the optimal barycentric Lagrange interpolating polynomials of a real-valued function $f$.
\begin{thm}[Optimal barycentric Lagrange interpolating polynomials]
Let $\varpi _{m,k,i}^{(\alpha_k^* )}, k = 0, \ldots, n; i = 0, \ldots, m$ be the set of quadrature weights associated with the adjoint GG points $z_{m,k,i}^{(\alpha_k^*)}, k = 0, \ldots, n; i = 0, \ldots, m$. The functions $\mathcal{L} _{OB,m,i}^{(\alpha_k^*)}(x), i = 0, \ldots, m; k = 0, \ldots ,n,$ defined by
\begin{equation}\label{eq:LOB1}
	{\mathcal{L}}_{OB,m,i}^{(\alpha _k^*)}(x) = \frac{{\xi _{m,k,i}^{(\alpha _k^*)}}}{{x - z_{m,k,i}^{(\alpha _k^*)}}}/\sum\limits_{j = 0}^m {\frac{{\xi _{m,k,j}^{(\alpha _k^*)}}}{{x - z_{m,k,j}^{(\alpha _k^*)}}}} ,\quad i = 0, \ldots ,m;\,k = 0, \ldots ,n,
\end{equation}
are the barycentric Lagrange interpolating polynomials of a real-valued function $f$ constructed through Gegenbauer interpolations at the adjoint GG nodes $z_{m,k,i}^{(\alpha_k^*)}, k = 0, \ldots, n; i = 0, \ldots, m$, with the barycentric weights
\begin{equation}\label{eq:BWeightsk1}
	\xi _{m,k,i}^{(\alpha _k^*)} = {( - 1)^i}\sin \left( {{{\cos }^{ - 1}}\left( {z_{m,k,i}^{(\alpha _k^*)}} \right)} \right)\sqrt {{\mkern 1mu} \varpi _{m,k,i}^{(\alpha _k^*)}},\quad i = 0, \ldots ,m;\,k = 0, \ldots ,n. 
\end{equation}
\end{thm}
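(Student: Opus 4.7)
The plan is to reduce the claim directly to Theorem \ref{thm:Kimo2016} by observing that, for each fixed index $k \in \{0,\ldots,n\}$, the adjoint nodes $\{z_{m,k,i}^{(\alpha_k^*)}\}_{i=0}^m$ are themselves a standard family of GG points, associated with Gegenbauer parameter $\alpha_k^*$ and degree $m$. Once this identification is made, the statement follows in essentially one line: the barycentric representation \eqref{eq:Bary1} is valid for any set of pairwise distinct interpolation points, and the explicit formula \eqref{eq:BGG1Mod1} supplies the correct barycentric weights for this particular family.

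First I would recall that the adjoint GG nodes $z_{m,k,i}^{(\alpha_k^*)}$ are, by definition, the zeros of $G_{m+1}^{(\alpha_k^*)}(x)$, with associated Christoffel numbers $\varpi_{m,k,i}^{(\alpha_k^*)}$. Because $\alpha_k^* > -1/2$ is enforced by the feasible region in the minimization problem \eqref{subsec:opt:reducedprob1}, and because the zeros of an orthogonal polynomial of degree $m+1$ are simple and contained in $(-1,1)$, the set $\{z_{m,k,i}^{(\alpha_k^*)}\}_{i=0}^m$ meets the hypotheses of Theorem \ref{thm:Kimo2016} verbatim under the substitution $n \mapsto m$, $\alpha \mapsto \alpha_k^*$, $x_{n,i}^{(\alpha)} \mapsto z_{m,k,i}^{(\alpha_k^*)}$ and $\varpi_{n,i}^{(\alpha)} \mapsto \varpi_{m,k,i}^{(\alpha_k^*)}$.

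Next, I would apply Theorem \ref{thm:Kimo2016} under this substitution to deduce that the barycentric weights associated with the node family $\{z_{m,k,i}^{(\alpha_k^*)}\}_{i=0}^m$ are precisely the quantities $\xi_{m,k,i}^{(\alpha_k^*)}$ given by \eqref{eq:BWeightsk1}. Inserting these weights and nodes into the generic barycentric formula \eqref{eq:Bary1} reproduces \eqref{eq:LOB1} immediately, and since $k$ was arbitrary, the assertion holds uniformly in $k \in \{0,\ldots,n\}$.

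The main obstacle is essentially bookkeeping rather than analysis: one must verify that $\alpha_k^* > -1/2$ is indeed enforced by \eqref{subsec:opt:reducedprob1} (so Theorem \ref{thm:Kimo2016} is applicable), and that the adjoint GG nodes are pairwise distinct so that the denominators in \eqref{eq:LOB1} are well-defined away from the nodes themselves. Both points are immediate from the theory of orthogonal polynomials, so the result is, in effect, a direct corollary of Theorem \ref{thm:Kimo2016} applied $n+1$ times.
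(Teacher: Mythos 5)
Your proposal is correct and follows essentially the same route as the paper: both arguments reduce the weight formula to Theorem \ref{thm:Kimo2016} applied to the adjoint nodes $\{z_{m,k,i}^{(\alpha_k^*)}\}_{i=0}^m$ viewed, for each fixed $k$, as GG points of degree $m$ with parameter $\alpha_k^*$. The only difference is presentational: the paper re-derives the equivalence of the barycentric form with the classical Lagrange form (via the modified Lagrange interpolant and the reproduction of constants), whereas you invoke that equivalence as a known property of \eqref{eq:Bary1} for any pairwise distinct nodes, which is legitimate.
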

\begin{proof}
Denote $f\left(z_{m,k,i}^{(\alpha_k^*)} \right)$ by $f_{m,k,i}^{(\alpha_k^*)}$, for each $k, i$. The classical Lagrange forms of the polynomials of degrees $n$ that interpolate the function $f$ at the set of points $\left\{\left(z_{m,k,i}^{(\alpha_k^*)}, f_{m,k,i}^{(\alpha_k^*)} \right)\right\}, k = 0, \ldots, n; i = 0, \ldots, m$, are defined by
\begin{equation}
	{P_{k,m}f}(x) = \sum\limits_{j = 0}^m {f_{m,k,j}^{(\alpha _k^*)}\,{\mathcal{L}}_{m,j}^{(\alpha _k^*)}(x)} ,
\end{equation}
where ${\mathcal{L}}_{m,j}^{(\alpha _k^*)}(x)$ are the classical Lagrange interpolating polynomials given by
\begin{equation}
	{\mathcal{L}}_{m,j}^{(\alpha _k^*)}(x) = \prod\limits_{\scriptstyle i = 0\hfill\atop
\scriptstyle i \ne j\hfill}^m {\frac{{x - z_{m,k,i}^{(\alpha _k^*)}}}{{z_{m,k,j}^{(\alpha _k^*)} - z_{m,k,i}^{(\alpha _k^*)}}}} .
\end{equation}
Clearly, ${\mathcal{L}}_{m,j}^{(\alpha _k^*)}\left( {z_{m,k,i}^{(\alpha _k^*)}} \right) = {\delta _{i,j}},$ where ${\delta _{i,j}}$ is the Kronecker delta function. Now rewrite the classical Lagrange interpolant in the so-called ``modified Lagrange interpolant'' given by
\begin{equation}
	{P_{k,m}f}(x) = {\psi _k}(x)\sum\limits_{j = 0}^m {f_{m,k,j}^{(\alpha _k^*)}\,\frac{{\xi _{m,k,j}^{(\alpha _k^*)}}}{{x - z_{m,k,j}^{(\alpha _k^*)}}}} ,
\end{equation}
where,
\begin{equation}
	{\psi _k}(x) = \prod\limits_{i = 0}^m {\left( {x - z_{m,k,i}^{(\alpha _k^*)}} \right)},
\end{equation}
and
\begin{equation}
	\xi _{m,k,j}^{(\alpha _k^*)} = \frac{1}{{\prod\limits_{\scriptstyle i = 0\hfill\atop
\scriptstyle i \ne j\hfill}^m {\left( {z_{m,k,j}^{(\alpha _k^*)} - z_{m,k,i}^{(\alpha _k^*)}} \right)} }}.
\end{equation}
Since the function values ${f_{m,k,j}^{(\alpha _k^*)}} = 1$ are evidently interpolated by ${P_{k,m}f}(x) = 1$, we have
\begin{equation}
	{\psi _k}(x)\sum\limits_{j = 0}^m {\frac{{\xi _{m,k,j}^{(\alpha _k^*)}}}{{x - z_{m,k,j}^{(\alpha _k^*)}}}}  = 1;
\end{equation}
hence,
\begin{equation}\label{eq:BLintk18}
{P_{k,m}f}(x) = \frac{{\sum\limits_{i = 0}^m {f_{m,k,i}^{(\alpha _k^*)}\,\frac{\displaystyle {\xi _{m,k,i}^{(\alpha _k^*)}}}{\displaystyle {x - z_{m,k,i}^{(\alpha _k^*)}}}} }}{{\sum\limits_{j = 0}^m {\,\frac{\displaystyle {\xi _{m,k,j}^{(\alpha _k^*)}}}{\displaystyle {x - z_{m,k,j}^{(\alpha _k^*)}}}} }} = \sum\limits_{i = 0}^m {f_{m,k,i}^{(\alpha _k^*)}\,{\mathcal{L}}_{OB,m,i}^{(\alpha _k^*)}(x)} .
\end{equation}
Eq. \eqref{eq:BWeightsk1} follows directly from Theorem \ref{thm:Kimo2016}.
\end{proof}

Integrating Eq. \eqref{eq:BLintk18} on $[-1, x_k]$, and applying the change of variable
\begin{equation}
	x = \frac{1}{2}\left( {\left( {x_{k} + 1} \right)\,t + x_{k} - 1} \right),
\end{equation}
yields,
\begin{equation}\label{eq:KRTS2015}
	\int_{ - 1}^{{x_k}} {{P_{k,m}}f(x)\,dx}  = \frac{{{x_k} + 1}}{2}\sum\limits_{i = 0}^m {f_{m,k,i}^{(\alpha _k^*)}\,\int_{ - 1}^1 {{\mathcal{L}}_{OB,m,i}^{(\alpha _k^*)}(t; - 1,{x_k})\,} \,dt} .
\end{equation}
Hence, the optimal barycentric Gegenbauer quadrature,
\begin{equation}\label{eq:obgq18}
\int_{ - 1}^{{x_k}} {{P_{k,m}}f(x)\,{\mkern 1mu} dx}  = \sum\limits_{i = 0}^m {p_{OB,k,i}^{(1)}{\mkern 1mu} f_{m,k,i}^{(\alpha _k^*)}},
\end{equation}
can be exactly calculated from Eq. \eqref{eq:KRTS2015} using an $(M + 1)$-point LG quadrature, where $p_{OB,k,i}^{(1)}, k = 0, \ldots, n; i = 0, \ldots, m$, are the elements of the first-order optimal barycentric GIM denoted by $\mathbf{P}_{OB}^{(1)}$. The $q$th-order optimal barycentric GIM can be directly generated from the first-order optimal barycentric GIM analogous to \cite[Eq. (2.34)]{Elgindy2013} and \cite[Eq. (4.43)]{Elgindy2016} by the following formulas:
\begin{equation}
	p_{OB,j,i}^{(q)} = \frac{{{{\left(x_{n,j}^{(\alpha )} - z_{m,j,i}^{(\alpha_j^* )}\right)}^{q - 1}}}}{{(q - 1)!}}p_{OB,j,i}^{(1)},\quad j = 0, \ldots ,n; i = 0, \ldots, m.
\end{equation}
\begin{rem}
Although the barycentric GIM is a square, dense matrix that generally leads to dense linear algebra, the optimal barycentric GIM on the other hand is a rectangular, dense matrix that could significantly reduce the computational cost of the collocation scheme for large collocation points; cf. \cite[Remark 5.2]{Elgindy2016b}. 
\end{rem}
\subsection{Error and convergence analysis}
\label{subsec:ECA123}
Now we are ready to present the following useful theorem, which outlines the creation of the optimal barycentric GIM and its associated quadrature. Moreover, the theorem marks the truncation error of the optimal barycentric Gegenbauer quadrature.
\begin{thm}\label{subsec1:theoremnew}
\label{subsec1:kimobasha1}
Let $\mathbb{T}_{n,m} = \{ z_{m,k,i}^{(\alpha _k^*)} ,k = 0,   \ldots ,n; i = 0,  \ldots ,m\}, n,m \in \mathbb{Z}^+$, be the set of adjoint GG points, where $\alpha _k^*$ are the optimal Gegenbauer parameters in the sense that
\begin{equation}\label{sec1:eq:optgegepar1}
\alpha _k^* = \mathop {{\text{argmin}}}\limits_{\alpha  >  - 1/2}  \eta _{k,m}^2(\alpha ),\quad k = 0, \ldots, n,
\end{equation}
and $\eta _{k,m}(\alpha )$ is as defined by Eq. \eqref{eq:etakk1}. Moreover, let $M = \left\lceil {(m - 1)/2} \right\rceil$, and denote by $\left\{ x_{M,s}^{(0.5)},\varpi _{M,s}^{(0.5)} \right\}_{s = 0}^M$, the set of LG points and quadrature weights, respectively. Assume further that $f(x) \in {C^{m+1} }[ - 1,  1]$ is approximated by the Gegenbauer polynomials expansion series such that the Gegenbauer coefficients are computed by interpolating the function $f(x)$ at the adjoint GG points $z_{m,k,i}^{(\alpha _k^*)} \in \mathbb{T}_{n, m}\, \forall k,i$. Then for any arbitrary nodes $x_k \in [-1, 1], k = 0, \ldots, n$, there exist a matrix $\mathbf{P}_{OB}^{(1)} = \left(p_{OB,k,i}^{(1)}\right), k = 0, \ldots, n; i = 0, \ldots, m$, and some numbers $\zeta_k \in [-1, 1], k = 0, \ldots, n$, such that
\begin{equation}\label{sec1:eq:ultraint2kimo}
\int_{ - 1}^{{x_k}} {f(x)\, dx}  = \sum\limits_{i = 0}^m {{p_{OB,k,i}^{(1)}}\,f_{m,k,i}^{(\alpha_k^*)}} + E_m^{(\alpha_k^* )}\left({x_{k}},{\zeta_{k}}\right),
\end{equation}
where
\begin{equation}\label{sec1:eq:qentrieskimo}
p_{OB,k,i}^{(1)} = \frac{{{x_k} + 1}}{2}\sum\limits_{s = 0}^M {{\varpi _{M,s}^{(0.5)}}\,\mathcal{L}_{OB,m,i}^{(\alpha _k^*)}\left( {x_{M,s}^{(0.5)}; - 1,{x_k}} \right)};
\end{equation}
\begin{equation}\label{sec1:eq:errorkimo}
E_m^{(\alpha _k^*)}({x_k},{\zeta _k}) = \frac{{{f^{(m + 1)}}({\zeta _k})}}{{{2^m}\,(m + 1)!}}{\mkern 1mu} {\eta _{k,m}}(\alpha _k^*).
\end{equation}
\end{thm}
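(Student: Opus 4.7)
The plan is to split $\int_{-1}^{x_k} f(x)\,dx$ into a quadrature part plus a remainder by interpolating $f$ at the adjoint GG nodes $z_{m,k,i}^{(\alpha_k^*)}$. By the defining minimization (\ref{sec1:eq:optgegepar1}), these nodes are precisely the zeros of $G_{m+1}^{(\alpha_k^*)}$, so both the weights and the error will admit closed forms in terms of the optimal Gegenbauer polynomial. For each fixed $k$ the argument mirrors the one underlying Theorem \ref{sec1:thm:elhawarykareem1}, with the primary GG set replaced by the $k$th adjoint GG set.

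First, I would take the barycentric Lagrange interpolant $P_{k,m}f$ at the nodes $z_{m,k,i}^{(\alpha_k^*)}$, as given in (\ref{eq:BLintk18}), integrate it over $[-1, x_k]$, and apply the change of variable $x = \tfrac{1}{2}\left((x_k+1)t + x_k - 1\right)$ to reach (\ref{eq:KRTS2015}). Each transformed integrand $\mathcal{L}_{OB,m,i}^{(\alpha_k^*)}(t; -1, x_k)$ is a polynomial in $t$ of degree at most $m$, so an $(M+1)$-point LG quadrature with $M = \lceil (m-1)/2\rceil$ integrates it exactly. This produces precisely the weights $p_{OB,k,i}^{(1)}$ in formula (\ref{sec1:eq:qentrieskimo}), and hence the quadrature sum $\sum_{i=0}^m p_{OB,k,i}^{(1)}\,f_{m,k,i}^{(\alpha_k^*)}$ appearing in (\ref{sec1:eq:ultraint2kimo}).

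Next, I would handle the error. The classical Lagrange interpolation remainder formula provides, for each $x \in [-1,1]$, some $\xi(x) \in [-1,1]$ with
\[
f(x) - P_{k,m}f(x) \;=\; \frac{f^{(m+1)}(\xi(x))}{(m+1)!}\, \prod_{i=0}^m \left(x - z_{m,k,i}^{(\alpha_k^*)}\right).
\]
Since the nodes are the roots of $G_{m+1}^{(\alpha_k^*)}$, the product equals $G_{m+1}^{(\alpha_k^*)}(x)/K_{m+1}^{(\alpha_k^*)}$. Integrating on $[-1, x_k]$ and pulling $f^{(m+1)}$ out at some $\zeta_k \in [-1,1]$ via a mean value argument yields
\[
E_m^{(\alpha_k^*)}(x_k, \zeta_k) \;=\; \frac{f^{(m+1)}(\zeta_k)}{(m+1)!\, K_{m+1}^{(\alpha_k^*)}} \int_{-1}^{x_k} G_{m+1}^{(\alpha_k^*)}(x)\, dx.
\]
Substituting the definition of $\eta_{k,m}(\alpha_k^*)$ from (\ref{eq:etakk1}) cancels the $K_{m+1}^{(\alpha_k^*)}$ factor and introduces the prefactor $1/2^m$, producing the desired form (\ref{sec1:eq:errorkimo}).

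The main delicate step is the extraction of $f^{(m+1)}(\zeta_k)$ from the remainder integral. Since $G_{m+1}^{(\alpha_k^*)}$ has $m+1$ real zeros in $(-1,1)$ it changes sign on $[-1, x_k]$ in general, so the classical weighted mean value theorem does not apply verbatim. One handles this by observing that the ratio $\int_{-1}^{x_k} f^{(m+1)}(\xi(x))\, G_{m+1}^{(\alpha_k^*)}(x)\, dx \,/\, \int_{-1}^{x_k} G_{m+1}^{(\alpha_k^*)}(x)\, dx$ is a real number that, by continuity of $f^{(m+1)}$ on the compact interval $[-1,1]$ together with the intermediate value theorem, is attained as $f^{(m+1)}(\zeta_k)$ for some $\zeta_k$; the degenerate case in which the denominator vanishes is treated by a separate limiting argument. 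This is the same convention already invoked in (\ref{sec1:eq:errorkimohat}) of Theorem \ref{sec1:thm:elhawarykareem1}, so once it is granted the proof reduces to the direct computations above.
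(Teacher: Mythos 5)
Your proposal is correct and follows essentially the same route as the paper: the paper's own proof is a one-line reduction to Theorem \ref{sec1:thm:elhawarykareem1} (replace $\alpha$ by $\alpha_k^*$, the GG nodes by the $k$th adjoint GG set, and truncate at the $(m+1)$th term), and you have simply unfolded that reduction into the underlying interpolation-remainder and exact-LG-integration argument, including the correct cancellation of $K_{m+1}^{(\alpha_k^*)}$ against the definition of $\eta_{k,m}$. The mean-value extraction of $f^{(m+1)}(\zeta_k)$ that you flag as delicate is indeed the only nontrivial point, but it is exactly the step the paper absorbs into Theorem \ref{sec1:thm:elhawarykareem1} (inherited from \cite{Elgindy2013}), so your treatment is no weaker than the paper's.
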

\begin{proof}
The quadrature error term \eqref{sec1:eq:errorkimo} follows directly from Theorem \ref{sec1:thm:elhawarykareem1} by substituting the value of $\alpha$ with $\alpha_k^*$, and expanding the Gegenbauer expansion series up to the $(m + 1)$th term.
\end{proof}

The following theorem is a direct corollary of Theorem \ref{sec1:thm:krooma1} and \cite[Theorem 2.3]{Elgindy2013}, and gives the error bounds of the optimal barycentric Gegenbauer quadrature.
\begin{thm}[Error bounds]\label{sec1:thm:krooma118}
Assume that $f(x) \in C^{m+1}[-1, 1]$, and ${\left\| {{f^{(m + 1)}}} \right\|_{{L^\infty }[-1,1]}} \le A \in {\mathbb{R}^ + }$, for some number $m \in \mathbb{Z}_0^+$, where the constant $A$ is independent of $m$. Moreover, let $\int_{-1}^{{x_{k}}} {f(x)\,dx}$, be approximated by the optimal barycentric Gegenbauer quadrature \eqref{eq:obgq18} up to the $(m+1)$th Gegenbauer quadrature expansion term, for each arbitrary integration node $x_{k} \in [-1, 1], k = 0, \ldots, m$. Then there exist some positive constants $D_1^{(\alpha_k^*)}$ and $D_2^{(\alpha_k^*)}$, independent of $m$ such that the truncation error of the barycentric Gegenbauer quadrature, ${E_m^{(\alpha_k^* )}\left( {x_{k},\zeta _{k}} \right)}$, is bounded by the following inequalities:
\small{\begin{empheq}[left={\left| {E_m^{({\alpha_k^*} )}\left( {x_{k},\zeta _{k}} \right)} \right| \le}\empheqlbrace]{align}
&{\frac{{A{2^{ - m}} \left( {x_{k} + 1} \right) \Gamma \left( {{\alpha_k^*}  + 1} \right) \Gamma \left( {m + 2{\alpha_k^*}  + 1} \right)}}{{\Gamma \left( {2{\alpha_k^*}  + 1} \right)\Gamma \left( {m + 2} \right)\Gamma \left( {m + {\alpha_k^*}  + 1} \right)}},\quad m \ge 0 \wedge {\alpha_k^*}  \ge 0,}\\
	&{\frac{{A{2^{ - m - 1}}\left( {x_{k} + 1} \right) \Gamma \left( {\alpha_k^*}  \right)}}{{\Gamma \left( {m + {\alpha_k^*}  + 1} \right)}} {\left( {\begin{array}{*{20}{c}}
	{\frac{{m - 1}}{2} + {\alpha_k^*} }\\
	{\frac{{m + 1}}{2}}
	\end{array}} \right)},\quad \frac{{m + 1}}{2} \in {\mathbb{Z}^ + } \wedge  - \frac{1}{2} < {\alpha_k^*}  < 0,}\\
	&{\frac{{A{2^{ - m}} \left( {x_{k} + 1} \right) \Gamma \left( {\alpha_k^*} + 1  \right) }}{{\sqrt {\left( {m + 1} \right)\left( {2{\alpha_k^*}  + m + 1} \right)} \Gamma \left( {m + {\alpha_k^*}  + 1} \right)}} {\left( {\begin{array}{*{20}{c}}
	{\frac{m}{2} + {\alpha_k^*} }\\
	{\frac{m}{2}}
	\end{array}} \right)},\quad \frac{m}{2} \in \mathbb{Z}_0^ +  \wedge  - \frac{1}{2} < {\alpha_k^*}  < 0}.
\end{empheq}
}
Moreover, as $m \to \infty$, the truncation error of the optimal barycentric Gegenbauer quadrature is asymptotically bounded by
\begin{empheq}[left={\left| {E_m^{({\alpha_k^*} )}\left( {x_{k},\zeta _{k}} \right)} \right| \simlteq}\empheqlbrace]{align}
&B_1^{({\alpha_k^*} )}{\left( {\frac{e}{2}} \right)^m}\left( {x_{k} + 1} \right){m^{{\alpha_k^*}  - m - \frac{3}{2}}},\quad {\alpha_k^*}  \ge 0,\\
&B_2^{({\alpha_k^*} )}{\left( {\frac{e}{2}} \right)^m}\left( {x_{k} + 1} \right){m^{ - m - \frac{3}{2}}},\quad - \frac{1}{2} < {\alpha_k^*}  < 0,
\end{empheq}
for all $k = 0, \ldots, m$, where $B_1^{({\alpha_k^*})} = A D_1^{({\alpha_k^*})}$ and $B_2^{({\alpha_k^*})} = B_1^{({\alpha_k^*})} D_2^{({\alpha_k^*})}$.
\end{thm}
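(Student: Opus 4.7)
The plan is to derive the stated inequalities as a direct corollary of Theorem \ref{subsec1:theoremnew} together with the bounding machinery of Theorem \ref{sec1:thm:krooma1}, rather than reprove anything from scratch. I would open the argument by invoking Eq. \eqref{sec1:eq:errorkimo}, which gives the closed-form expression
\[
E_m^{(\alpha_k^*)}(x_k,\zeta_k) \;=\; \frac{f^{(m+1)}(\zeta_k)}{2^m\,(m+1)!}\,\eta_{k,m}(\alpha_k^*),
\]
and then apply the hypothesis $\|f^{(m+1)}\|_{L^\infty[-1,1]}\le A$ to reduce the entire problem to bounding $|\eta_{k,m}(\alpha_k^*)|$, i.e. to controlling the normalized partial integral $\bigl|\int_{-1}^{x_k} G_{m+1}^{(\alpha_k^*)}(x)\,dx\bigr|/K_{m+1}^{(\alpha_k^*)}$ (cf. Eq. \eqref{eq:etakk1}).

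Next I would observe that the three case-by-case bounds established in Theorem \ref{sec1:thm:krooma1} (and ultimately in \cite[Theorem 4.3]{Elgindy2016}) rely only on two ingredients: the magnitude of the Gegenbauer parameter and the value $x_{n,j}^{(\alpha)}+1$ entering as the length of the integration interval. Neither of these ingredients exploits the fact that the upper endpoint is itself a GG node; the same estimates remain valid verbatim when the endpoint is replaced by any $x_k\in[-1,1]$ and the parameter by $\alpha_k^*$. Substituting $n\mapsto m$, $\alpha\mapsto\alpha_k^*$, and $x_{n,j}^{(\alpha)}\mapsto x_k$ into the three prestated inequalities of Theorem \ref{sec1:thm:krooma1} therefore yields precisely the three finite-$m$ bounds of Theorem \ref{sec1:thm:krooma118}.

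For the asymptotic estimates as $m\to\infty$, I would apply Stirling's formula to the $\Gamma$-ratios in the finite-$m$ bounds exactly as done in the proof of Theorem \ref{sec1:thm:krooma1}, producing positive constants $D_1^{(\alpha_k^*)}, D_2^{(\alpha_k^*)}$ independent of $m$ so that the two asymptotic inequalities hold with $B_1^{(\alpha_k^*)}=A\,D_1^{(\alpha_k^*)}$ and $B_2^{(\alpha_k^*)}=B_1^{(\alpha_k^*)}D_2^{(\alpha_k^*)}$. Since the optimality condition \eqref{sec1:eq:optgegepar1} makes $\eta_{k,m}^2(\alpha_k^*)$ no larger than $\eta_{k,m}^2(\alpha)$ at any admissible $\alpha$, replacing $\alpha$ by $\alpha_k^*$ can only tighten these estimates, so the bounds are safe.

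The main obstacle, such as it is, is a bookkeeping one rather than a technical one: I need to verify that the derivation in \cite[Theorem 4.3]{Elgindy2016} really treats the upper integration limit as a free parameter in $[-1,1]$ and not as a designated quadrature node. Once this is confirmed, the proof reduces to clean substitution plus invocation of the optimality of $\alpha_k^*$. No new estimates on Gegenbauer polynomials or on the quadrature weights are needed.
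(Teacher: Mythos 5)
Your proposal is correct and follows essentially the same route as the paper, which simply declares this result a direct corollary of Theorem \ref{sec1:thm:krooma1} and the corresponding theorem of \cite{Elgindy2013}: the finite-$m$ bounds carry over by the substitutions $n\mapsto m$, $\alpha\mapsto\alpha_k^*$, $x_{n,j}^{(\alpha)}\mapsto x_k$ (since nothing in their derivation requires the upper limit to be a GG node), and the asymptotic bounds follow from Stirling's approximation applied to $(m+1)!$ and the $\Gamma$-ratios. If anything, you supply more justification than the paper does, which gives no explicit proof.
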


Notice here that Theorem \ref{sec1:thm:krooma118} gives more tight asymptotic error bounds than that obtained in \cite[Theorem 2.3]{Elgindy2013} by realizing that $(m + 1)! = (m + 1) \cdot m! \approx m \cdot \sqrt {2\,\pi \,m} \,{\left( {m/e} \right)^m} = \sqrt {2\,\pi } \,{m^{3/2}}\,{\left( {m/e} \right)^m}$, as $m \to \infty$.

The following theorem parallels \cite[Theorem 2.4]{Elgindy2013}, as it shows that the optimal barycentric Gegenbauer quadrature converges to the optimal Chebyshev quadrature in the $L^{\infty}$-norm, for a large-scale number of expansion terms.
\begin{thm}[Convergence of the optimal barycentric Gegenbauer quadrature]\label{sec1:thm:krooma2}
Assume that $f(x) \in C^{m+1}[-1, 1]$, and $\mathop {{\max}}\nolimits_{\left| x \right| \le 1} \left| {{f^{(m + 1)}}(x)} \right| \le A \in {\mathbb{R}^ + }$, for some number $m \in \mathbb{Z}^+$, where the constant $A$ is independent of $m$. Moreover, let $\int_{ - 1}^{{x_k}} {f(x)\,dx}$ be approximated by the optimal barycentric Gegenbauer quadrature \eqref{eq:obgq18} up to the $(m+1)$th Gegenbauer quadrature expansion term, for each arbitrary integration node $x_k, k = 0, \ldots, m$. Then the optimal barycentric Gegenbauer quadrature converges to the barycentric Chebyshev quadrature in the $L^{\infty}$-norm as $m \to \infty$; that is,
\begin{equation}\label{eq:obgq182}
\sum\limits_{i = 0}^m {p_{OB,k,i}^{(1)}f_{m,k,i}^{(\alpha _k^*)}} \to \sum\limits_{i = 0}^m {p_{B,k,i}^{(1)}f_{m,i}^{(0)}} ,\quad {\text{as }}m \to \infty \;\forall k.
\end{equation}
\end{thm}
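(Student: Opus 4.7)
The plan is to establish \eqref{eq:obgq182} by following the blueprint of \cite[Theorem 2.4]{Elgindy2013}: show that both sides converge to the common exact integral $\int_{-1}^{x_k} f(x)\, dx$ at a super-exponential rate, uniformly in $k$, and then conclude via the triangle inequality. Both quadratures appearing in \eqref{eq:obgq182} are designed to approximate this same integral---the left-hand side via interpolation at the adjoint GG nodes with the optimal parameter $\alpha_k^*$, and the right-hand side as the $\alpha=0$ specialization of the barycentric Gegenbauer quadrature of Section \ref{sec:BGIMAQ}, i.e.\ interpolation at the Chebyshev--Gauss nodes.

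First I would apply Theorem \ref{sec1:thm:krooma118} at each node $x_k \in [-1,1]$ to bound
\[
\left| \int_{-1}^{x_k} f(x)\, dx \;-\; \sum_{i=0}^m p_{OB,k,i}^{(1)}\, f_{m,k,i}^{(\alpha_k^*)} \right|
\]
by a quantity of the form $C_1 (e/2)^m m^{\alpha_k^* - m - 3/2}$ (or its $-1/2 < \alpha_k^* < 0$ counterpart), which decays faster than any fixed negative power of $m$. Next, since the barycentric Chebyshev quadrature is precisely the $\alpha = 0$ instance of the construction of Section \ref{sec:BGIMAQ}, Theorem \ref{sec1:thm:krooma1} with $\alpha = 0$ delivers the analogous estimate
\[
\left| \int_{-1}^{x_k} f(x)\, dx \;-\; \sum_{i=0}^m p_{B,k,i}^{(1)}\, f_{m,i}^{(0)} \right| \simlteq B_1^{(0)} \left(\frac{e}{2}\right)^m m^{-m - 3/2}.
\]
Combining the two via the triangle inequality then controls the absolute difference between the two sides of \eqref{eq:obgq182} by the same super-exponentially decaying quantity, and letting $m \to \infty$ yields the pointwise claim.

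Upgrading pointwise convergence to $L^\infty$ convergence over $k = 0, \ldots, m$ requires that all constants entering the two bounds be uniform in $k$. Since $x_k + 1 \le 2$ and the hypothesis $\max_{|x|\le 1} |f^{(m+1)}(x)| \le A$ is independent of $k$, the Chebyshev-side bound is immediately $k$-uniform. The main obstacle is the optimal-side bound, because $\alpha_k^*$---and hence the constants $B_1^{(\alpha_k^*)}$, $B_2^{(\alpha_k^*)}$---depends on both $k$ and $m$ in a way that is not a priori controlled. I would overcome this by invoking the defining optimality relation \eqref{sec1:eq:optgegepar1}, which guarantees $\eta_{k,m}^2(\alpha_k^*) \le \eta_{k,m}^2(0)$ for every $k$; consequently the optimal-parameter quadrature error is dominated, uniformly in $k$, by the Chebyshev-parameter error whose asymptotic bound has already been shown to be $k$-uniform. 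This secures the uniform control needed for the $L^\infty$ convergence and closes the argument.
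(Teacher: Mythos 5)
Your proposal is correct and follows essentially the same route the paper (implicitly) takes: the paper gives no proof of this theorem at all, merely noting that it ``parallels'' \cite[Theorem 2.4]{Elgindy2013}, and your argument --- bounding both quadrature errors against the common exact integral and applying the triangle inequality --- is precisely the blueprint of that cited result. Your handling of the $k$-uniformity via the optimality relation $\eta_{k,m}^2(\alpha_k^*) \le \eta_{k,m}^2(0)$, which dominates the optimal-parameter error term \eqref{sec1:eq:errorkimo} by the Chebyshev one without needing any control on $B_1^{(\alpha_k^*)}$, is a careful detail the paper leaves unstated.
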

\section{Computational algorithms}
\label{sec:CA1}
In this section, we discuss $10$ computational algorithms created to efficiently calculate the developed barycentric GIMs and quadratures and their optimal partners. We commence our discussion with Algorithms \ref{sec:COTBGFTGSOP1:alg1matrix} and \ref{sec:COTBGQFTGSOP1:alg2matrix}, which represent two simple and fast computational algorithms for the computation of the square barycentric GIM and its associated quadrature for the GG set of points; cf. Appendix \ref{appendix:CAP}. We find that one of the major advantages of the established algorithms lies in the reduction of the operational cost required for calculating the GIM and quadrature. Indeed, the barycentric weights can be computed in $O(n)$ operations, whereas the LG points and quadrature weights, $\left\{ {x_{N,i}^{(0.5)},\varpi _{N,i}^{(0.5)}} \right\}_{i = 0}^N$, require $O(N)$ operations. Therefore, the computation of $\left\{ {p_{B,j,i}^{(1)}} \right\}_{i = 0}^n$ through Eq. \eqref{eq:pbGIMe1} costs $O(N \cdot n)$ operations per point-- the same cost required for the evaluation of the barycentric Gegenbauer quadrature through Eq. \eqref{eq:pbquad1} for each point. This amounts to $O\left(N \cdot n^2\right)$ for the construction of the barycentric GIM, ${\mathbf{P}}_B^{(1)}$. On the other hand, the evaluation of the Gegenbauer quadrature derived in \cite[Theorem 2.1]{Elgindy2013} in basis form requires $O\left(n^2\right)$ operations per point while the cost of constructing the associated basis GIM rises up to $O\left(n^3\right)$ operations. Figure \ref{Comp1} shows the average elapsed CPU time in $10$ runs required for the construction of the basis GIM, $\hat {\mathbf{P}}^{(1)}$, derived by \cite{Elgindy2013} and the barycentric GIM, $\mathbf{P}_B^{(1)}$ using $n = 20, 60, 80, 120, 140$ points and $\alpha = -0.25(0.25)2$. Both GIMs were constructed in each case using the same inputs of GG points and quadrature weights, $\left\{ {x_{n,i}^{(\alpha )},\varpi _{n,i}^{(\alpha )}} \right\}_{i = 0}^n$. Clearly, the construction of $\mathbf{P}_B^{(1)}$ is faster than $\hat {\mathbf{P}}^{(1)}$, and the gap grows wider for increasing values of $n$.

\begin{figure}[ht]
\centering
\includegraphics[scale=0.35]{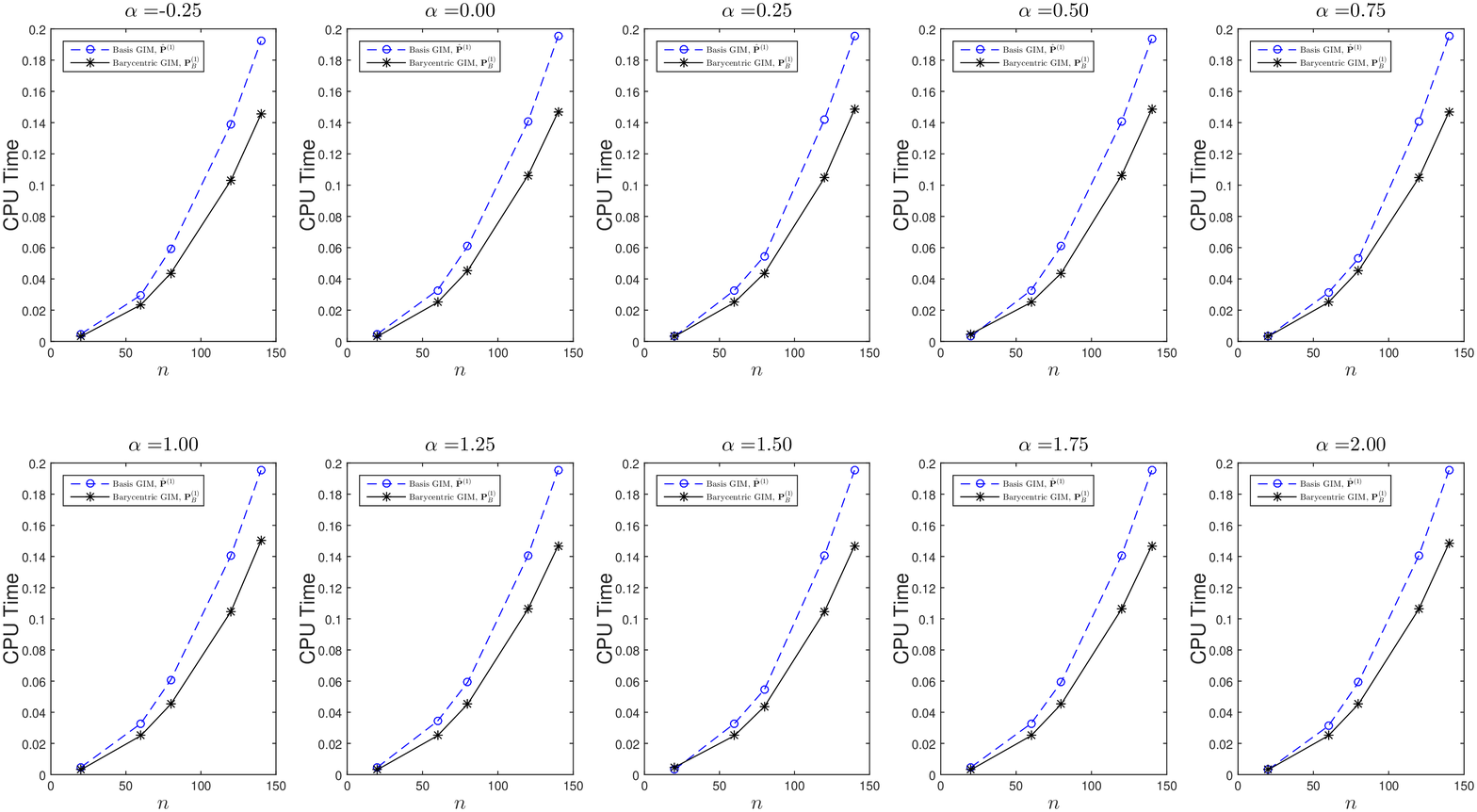}
\caption{The average elapsed CPU time (in seconds) in 10 runs required for the construction of the basis GIM, $\hat {\mathbf{P}}^{(1)}$, derived by \cite{Elgindy2013} and the barycentric GIM, $\mathbf{P}_B^{(1)}$ using $n = 20, 60, 80, 120; 140$, and $\alpha = -0.25(0.25)2$.}
\label{Comp1}
\end{figure}

To analyze the errors of the barycentric and basis quadratures, we have conducted several numerical experiments on the three test functions ${f_1}(x) = {x^{20}},\;{f_2}(x) = {e^{ - {x^2}}}$, and ${f_3}(x) = 1/(1 + 25\,{x^2})$, which were studied by \cite{Elgindy2013}. The absolute errors (AEs) obtained for $\{f_i\}_{i=1}^3$ are shown in Figures \ref{fig:test1}-\ref{fig:test3}, where one can clearly verify that both quadratures share the same order of error for all $\{f_i\}_{i=1}^3$ with almost matched error values for the third test function, $f_3$.

\begin{figure}[ht]
\centering
\subfigure[]{
\includegraphics[scale=0.3]{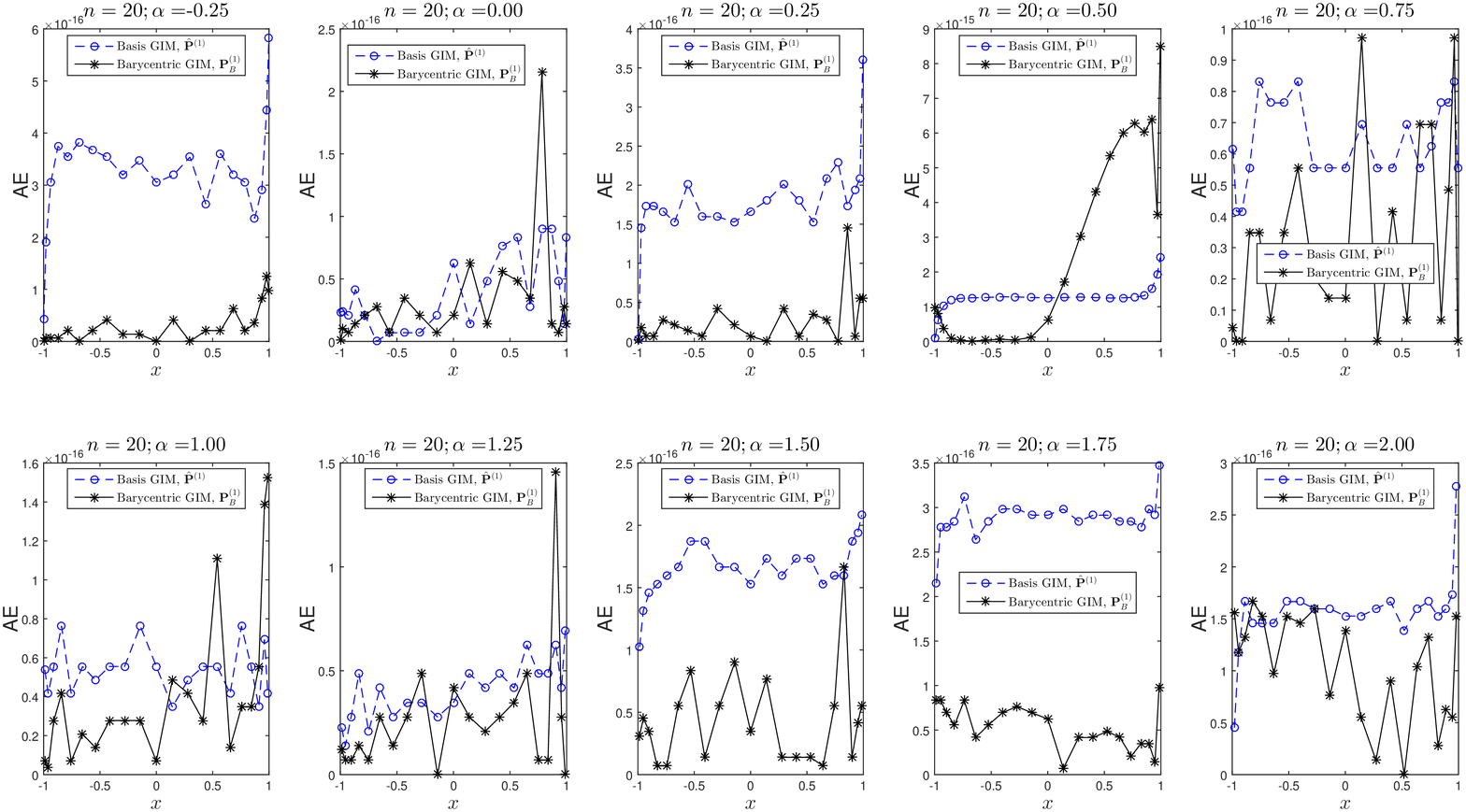}
\label{fig:f120}
}
\subfigure[]{
\includegraphics[scale=0.3]{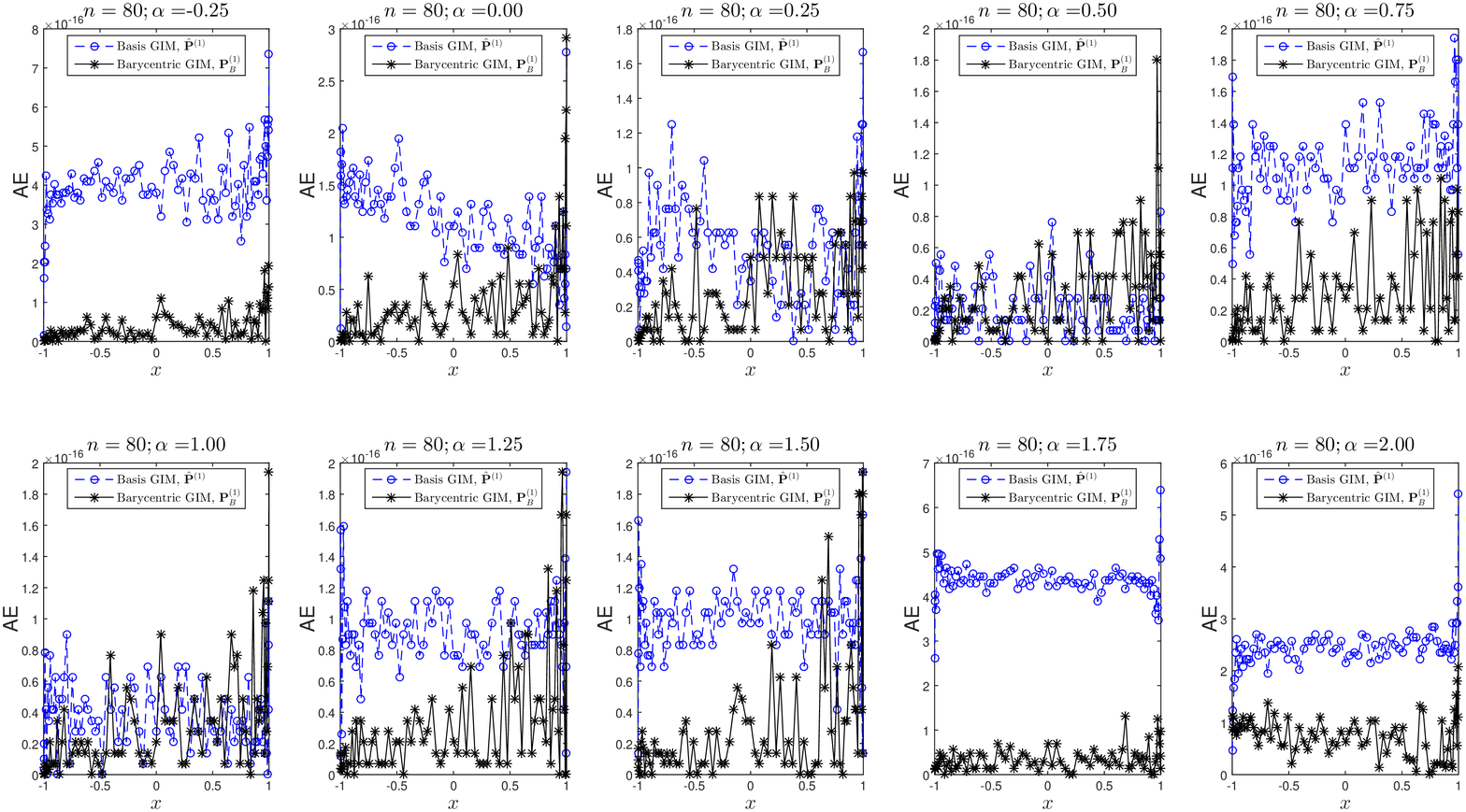}
\label{fig:f180}
}
\caption{The AEs of the barycentric and basis quadratures for $f_1$ on $[-1, 1]$ for $\alpha = -0.25(0.25)2$. Figures \ref{fig:f120} and \ref{fig:f180} show the results for $n = 20; 80$, respectively.}
\label{fig:test1}
\end{figure}
\begin{figure}[ht]
\centering
\subfigure[]{
\includegraphics[scale=0.3]{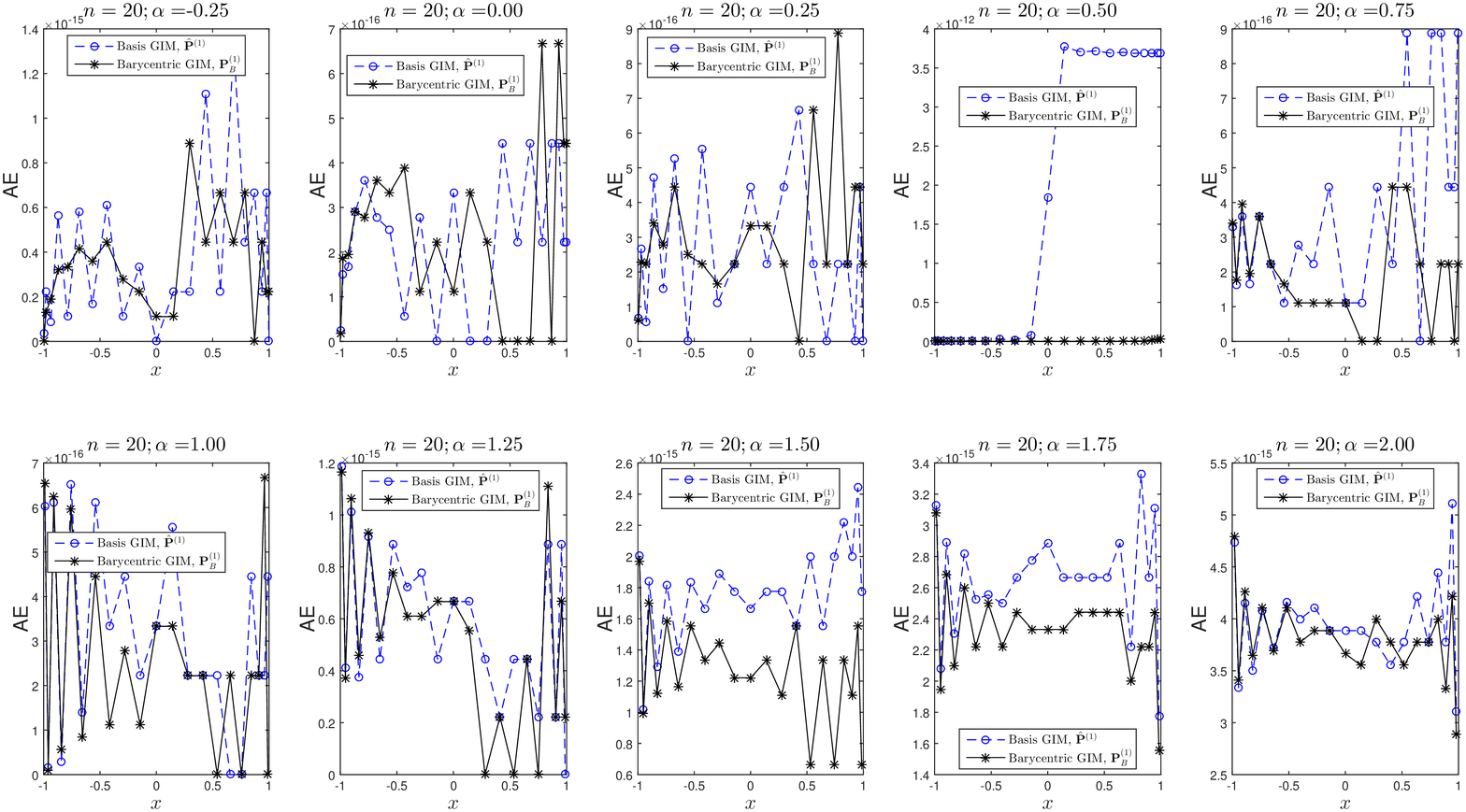}
\label{fig:f220}
}
\subfigure[]{
\includegraphics[scale=0.3]{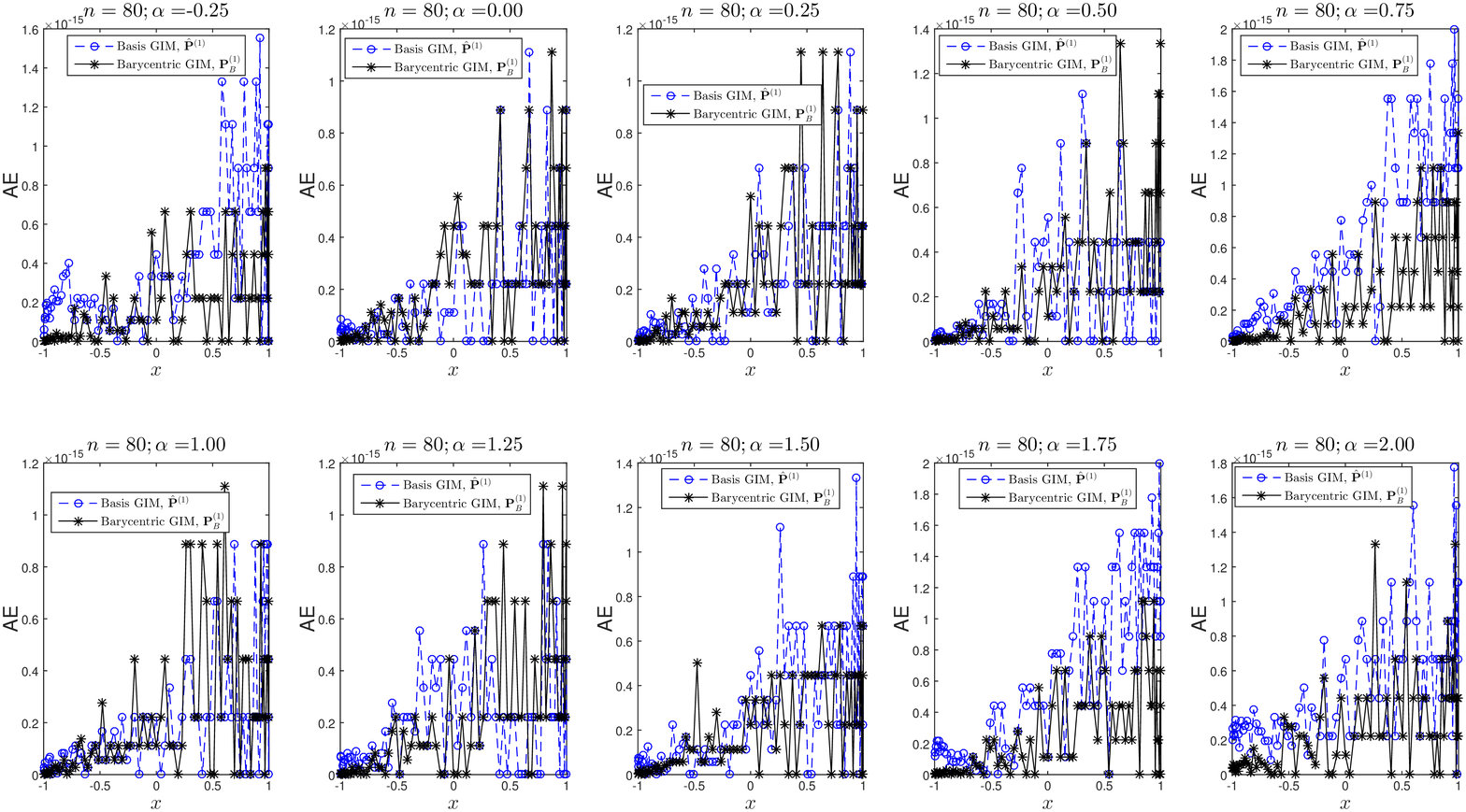}
\label{fig:f280}
}
\caption{The AEs of the barycentric and basis quadratures for $f_2$ on $[-1, 1]$ for $\alpha = -0.25(0.25)2$. Figures \ref{fig:f220} and \ref{fig:f280} show the results for $n = 20; 80$, respectively.}
\label{fig:test2}
\end{figure}
\begin{figure}[ht]
\centering
\subfigure[]{
\includegraphics[scale=0.3]{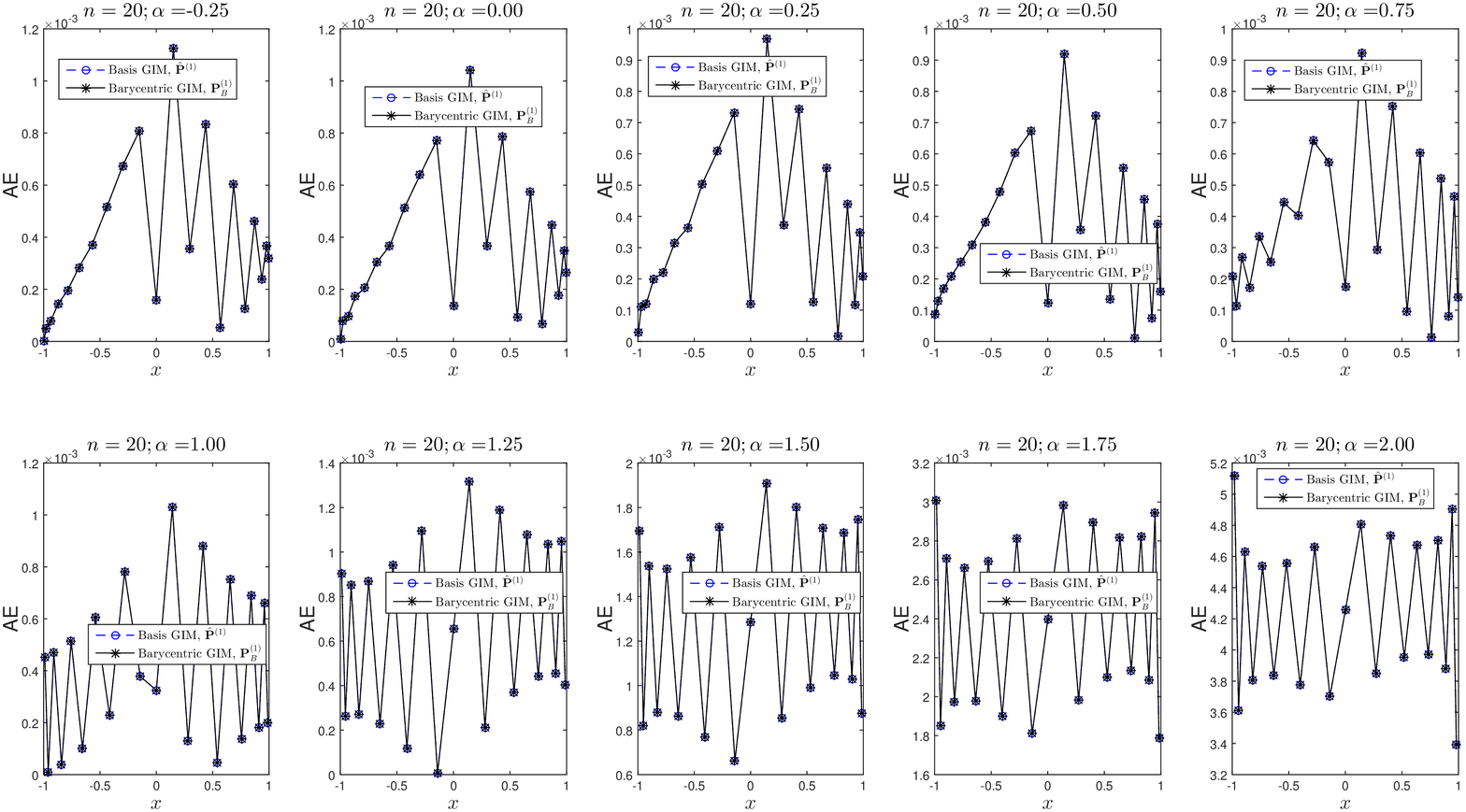}
\label{fig:f320}
}
\subfigure[]{
\includegraphics[scale=0.3]{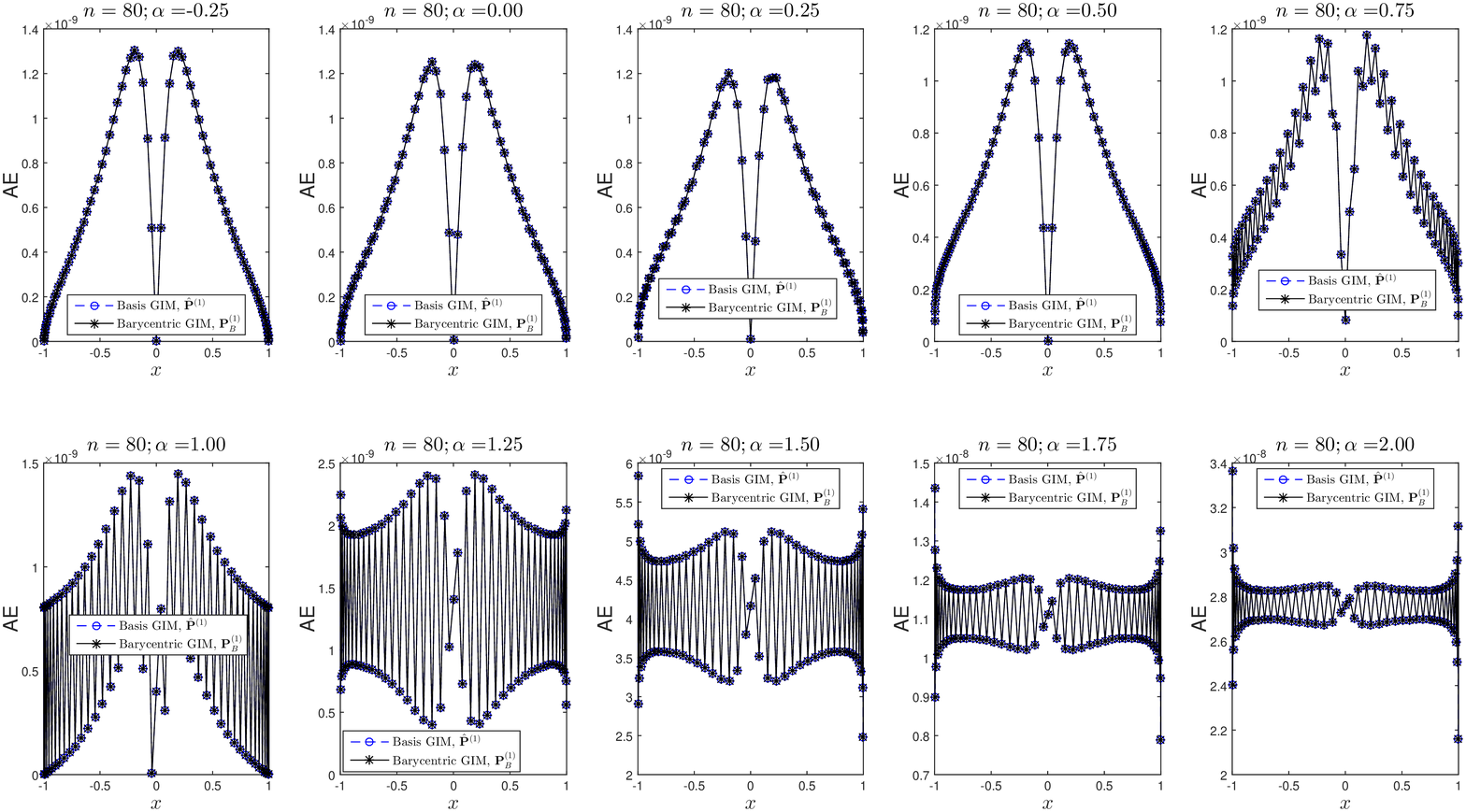}
\label{fig:f380}
}
\caption{The AEs of the barycentric and basis quadratures for $f_3$ on $[-1, 1]$ for $\alpha = -0.25(0.25)2$. Figures \ref{fig:f320} and \ref{fig:f380} show the results for $n = 20; 80$, respectively.}
\label{fig:test3}
\end{figure}

Notice that both the CPU time and AEs were not reported for $n = 40, 100, 160$ as we observed that
\begin{equation}\label{eq:rarecase1}
	\left| {\hat x_{N,k}^{(0.5)} - x_{n,i}^{(\alpha )}} \right| \le \varepsilon_{\text{mach}},
\end{equation}
for certain values of $\alpha$, where 
\begin{equation}
	\hat x_{N,k}^{(0.5)} = \frac{1}{2}\left( {\left( {x_{n,j}^{(\alpha )} + 1} \right){\mkern 1mu} x_{N,k}^{(0.5)} + x_{n,j}^{(\alpha )} - 1} \right)\;\forall k,
\end{equation}
and $\varepsilon_{\text{mach}}$ denotes the machine precision that is approximately equals $2.2204 \times 10^{-16}$ in double precision arithmetic. For instance, we find that Eq. \eqref{eq:rarecase1} is satisfied for $k = 25$ and $i = 33$ using $101$ GG points at $\alpha = 1$, where both $\hat x_{50,25}^{(0.5)}$ and $x_{100,33}^{(1)}$ equals -0.5; thus overflow occurs. In fact, such a rare and unpleasant difficulty could happen whenever,
\begin{equation}\label{eq:rarecase2}
	x_{N,k}^{(0.5)} = \frac{{2x_{n,i}^{(\alpha )} - x_{n,j}^{(\alpha )} + 1}}{{x_{n,j}^{(\alpha )} + 1}},
\end{equation}
for some $i, j \in \{0, \ldots, n\}, k \in \{0, \ldots, N\}$ in exact arithmetic, or
\begin{equation}\label{eq:rarecase22}
	\left| {1 + x_{N,k}^{(0.5)} - \frac{{2\left( {1 + x_{n,i}^{(\alpha )}} \right)}}{{1 + x_{n,j}^{(\alpha )}}}} \right| \le \varepsilon \;\forall i,j;k,
\end{equation}
in finite precision arithmetic, for some relatively small positive number $\varepsilon$. Therefore, a sufficient condition for constructing the barycentric GIM using Algorithm \ref{sec:COTBGFTGSOP1:alg1matrix} is given by
\begin{equation}\label{eq:rarecase3}
	\left| {1 + x_{N,k}^{(0.5)} - \frac{{2\left( {1 + x_{n,i}^{(\alpha )}} \right)}}{{1 + x_{n,j}^{(\alpha )}}}} \right| > \varepsilon \;\forall i,j;k.
\end{equation}
We shall refer to the set,
\begin{equation}
	\mathbb{F}_B = \left\{(n, \alpha):{\text{The Sufficient Condition }}\eqref{eq:rarecase3}\text{ is always satisfied}\right\},
\end{equation}
by the ``barycentric GIM feasible set.''

One approach to construct the barycentric GIM for $(n, \alpha) \not\in \mathbb{F}_B$, is to modify Algorithm \ref{sec:COTBGFTGSOP1:alg1matrix} so that it accomplishes the fundamental property of Lagrange interpolating polynomials,
\begin{equation}\label{eq:fundprop1}
\cal{L}_{B,n,i}^{(\alpha )}\left( {x_{N,k}^{(0.5)}; - 1,x_{n,j}^{(\alpha )}} \right) = 1,\quad {\text{if }}\hat x_{N,k}^{(0.5)} = x_{n,i}^{(\alpha )}\;\forall i,k.
\end{equation}
Algorithm \ref{sec:MCOTBGFTGSOP1:alg1matrix} is a modification to Algorithm \ref{sec:COTBGFTGSOP1:alg1matrix}, which ensures the satisfaction of the Sufficient Condition \eqref{eq:rarecase3}; cf. Appendix \ref{appendix:CAP}. The trick here is to set $\cal{L}_{B,n,i}^{(\alpha )}\left( {x_{N,k}^{(0.5)}; - 1,x_{n,j}^{(\alpha )}} \right) = 1 \;\forall i,j,k$ initially, and then update only the values of $\cal{L}_{B,n,i}^{(\alpha )}\left( {x_{N,k}^{(0.5)}; - 1,x_{n,j}^{(\alpha )}} \right)$ for which Condition \eqref{eq:rarecase3} is satisfied. However, the result of such a modification casts its shadows on the time complexity required for constructing the barycentric GIM, ${\mathbf{P}}_B^{(1)}$. Indeed, Figure \ref{Comp2} shows that the time required for constructing the basis GIM, $\hat {\mathbf{P}}^{(1)}$, derived by \cite{Elgindy2013} becomes shorter than that required for constructing ${\mathbf{P}}_B^{(1)}$ using several values of $n$ and $\alpha$.

\begin{figure}[ht]
\centering
\includegraphics[scale=0.35]{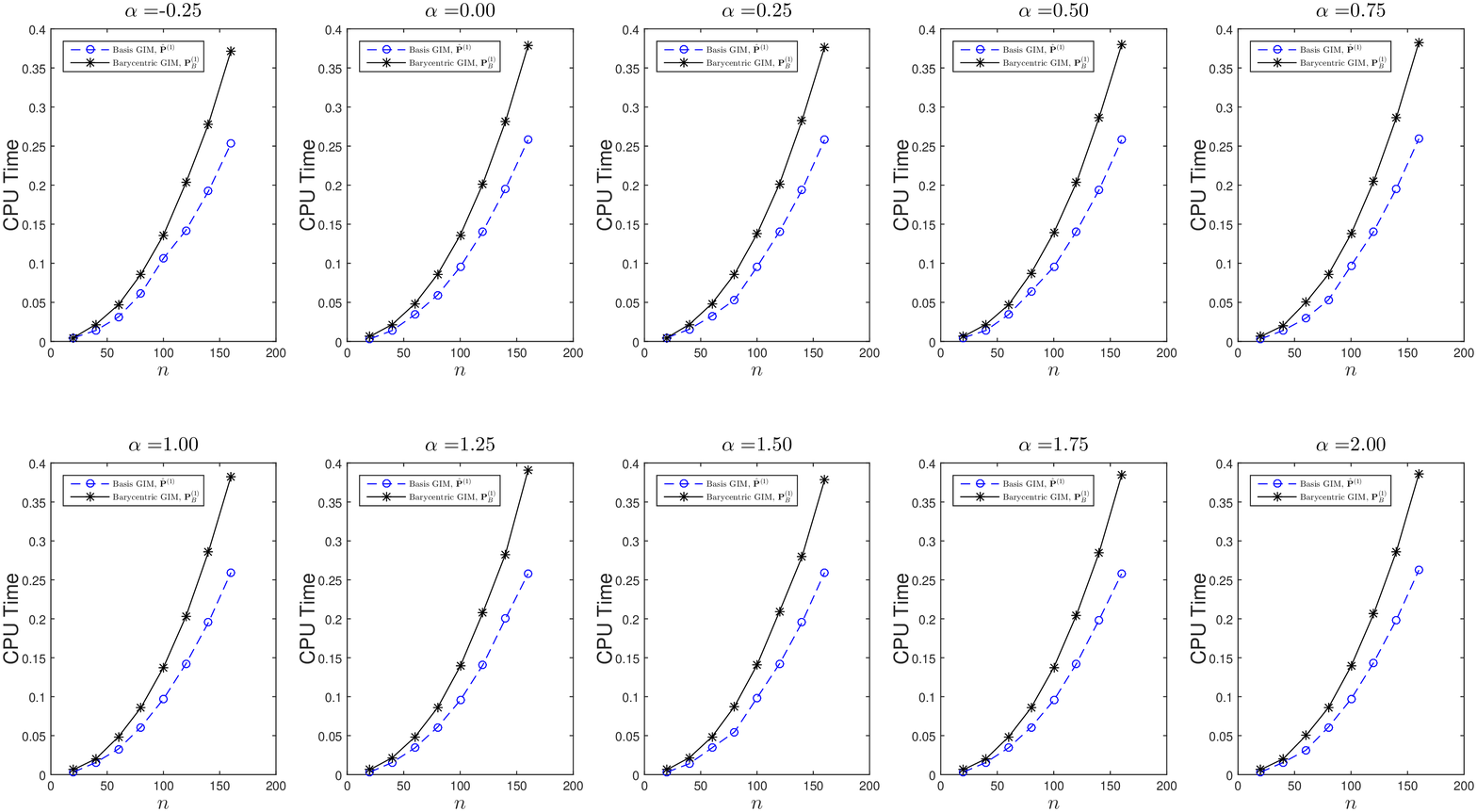}
\caption{The average elapsed CPU time in 10 runs required for the construction of the basis GIM, $\hat {\mathbf{P}}^{(1)}$, derived by \cite{Elgindy2013} and the barycentric GIM, $\mathbf{P}_B^{(1)}$, using $n = 20(20)160$, and $\alpha = -0.25(0.25)2$.}
\label{Comp2}
\end{figure}

Another approach to successfully construct the barycentric GIM for $(n, \alpha) \not\in \mathbb{F}_B$ without applying Property \eqref{eq:fundprop1} is to increase the value of $N$ if Condition \eqref{eq:rarecase22} occurs before applying Algorithm \ref{sec:COTBGFTGSOP1:alg1matrix}; that is, we use instead an $\left\lceil {(n + 3)/2} \right\rceil $-point LG quadrature for calculating the integrals
\begin{equation}\label{eq:intk1}
	\int_{ - 1}^1 {{\cal L}_{B,n,i}^{(\alpha )}\left( {t; - 1,x_{n,j}^{(\alpha )}} \right)\,dt},\quad i,j = 0, \ldots, n.
\end{equation}
 For instance, replacing $N$ with $(N + 1)$ would change the values of $\{x_{N,k}^{(0.5)}\}_{k = 0}^N$ with the possibility of fulfilling Condition \eqref{eq:rarecase3} while exactly calculating the integrals \eqref{eq:intk1} and retaining relatively lower computational cost. Algorithm \ref{sec:TTSCFTCOTBGFTGSOP:alg1matrix} checks for the satisfaction of the Sufficient Condition \eqref{eq:rarecase3}; cf. Appendix \ref{appendix:CAP}. Now running Algorithm \ref{sec:COTBGFTGSOP1:alg1matrix} with the replacement of the statement $N \leftarrow \left\lceil {(n - 1)/2} \right\rceil$ by $N \leftarrow \left\lceil {(n + 1)/2} \right\rceil$, retrieves the previous rapid construction of the barycentric GIM, ${\mathbf{P}}_B^{(1)}$, as verified by Figure \ref{Comp3}.

\begin{rem}
We checked the Sufficient Condition \eqref{eq:rarecase3} using Algorithm \ref{sec:TTSCFTCOTBGFTGSOP:alg1matrix} running on a Windows 10 64-bit operating system endowed with MATLAB V. R2014b (8.4.0.150421) in double precision arithmetic for $n = 1(1)100$, $\alpha = -0.4(0.001)2$, and $\varepsilon = \varepsilon_{\text{mach}}$. Failure to construct the barycentric GIM was only reported at $\alpha = 1$ for $n = 4(12)100$. Therefore, Gegenbauer collocation schemes can be carried out safely and efficiently using any of the aforementioned valid input data.
\end{rem}

\begin{figure}[ht]
\centering
\includegraphics[scale=0.35]{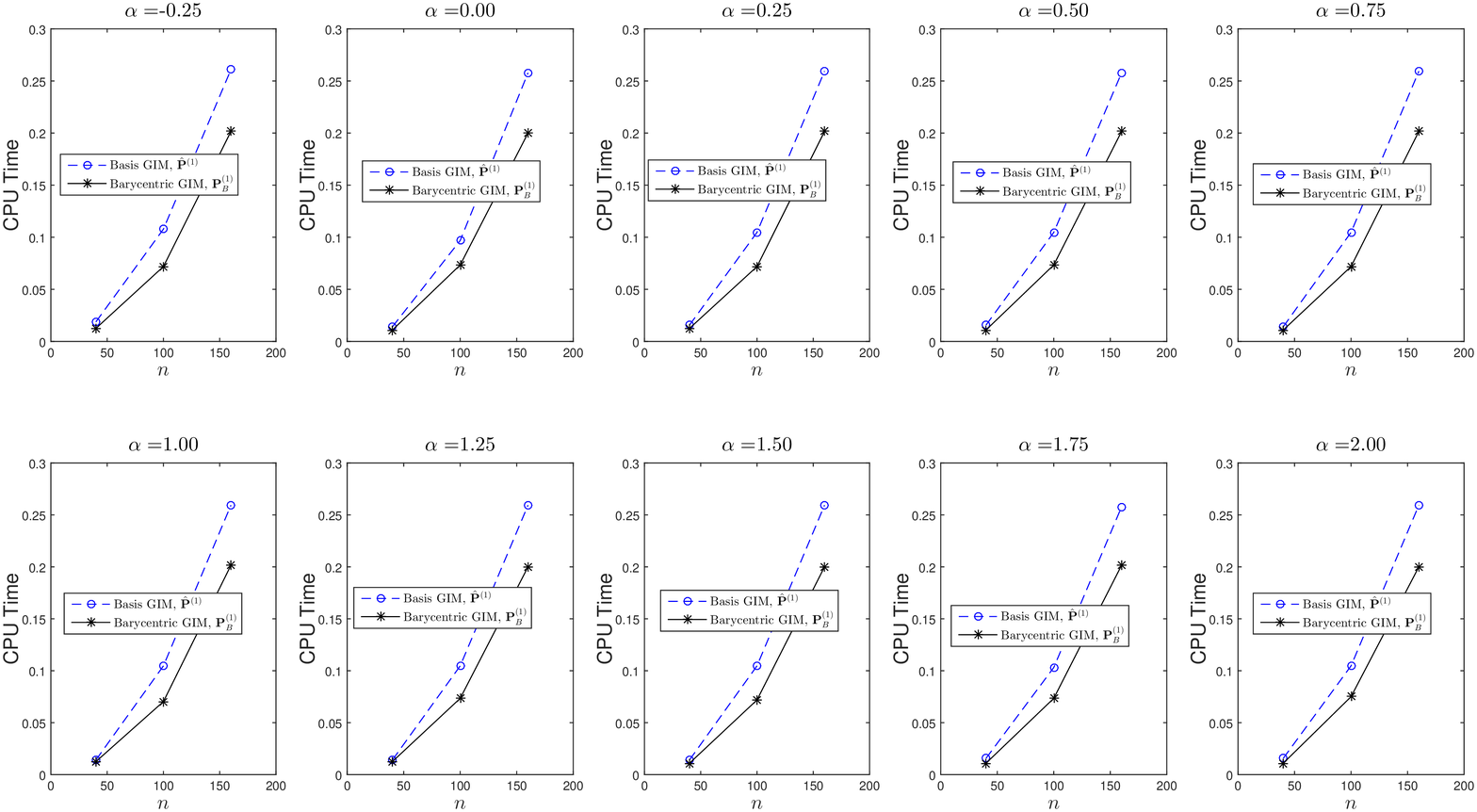}
\caption{The average elapsed CPU time in 10 runs required for the construction of the basis GIM, $\hat {\mathbf{P}}^{(1)}$, derived by \cite{Elgindy2013} and the barycentric GIM, $\mathbf{P}_B^{(1)}$ using $n = 40, 100; 160$, and $\alpha = -0.25(0.25)2$.}
\label{Comp3}
\end{figure}
\subsection{Resolving boundary-value problems using the barycentric Gegenbauer quadrature}
\label{subsec: RBVPUTBG1}
To solve the integral reformulations of differential problems provided with boundary conditions using Gauss collocation methods, one needs to approximate the integral of the unknown solution $y(x)$ on $[-1, 1]$ using Gauss collocation points $\{x_{n,i}^{(\alpha )}\}_{i=0}^n \subset (-1, 1)$; that is the following integral is often required,
\begin{equation}
	I = \int_{ - 1}^1 {y(x)\,dx}.
\end{equation}
Since the barycentric GIM is in principal designed for the GG points, we cannot directly apply Algorithm \ref{sec:COTBGFTGSOP1:alg1matrix} for evaluating $I$. Notice here that we cannot approximate $I$ using a LG quadrature unless the collocation points are the LG points. Therefore, to consider the more general case, we shall modify Algorithm \ref{sec:COTBGFTGSOP1:alg1matrix} to work for general GG points. At first, denote the point $1$ by ${x_{n+1,j}^{(\alpha )}}$. Then Eqs. \eqref{eq:pbGIMe1} can be written as follows:
\begin{equation}\label{eq:k2862015iss1}
	p_{B,n + 1,i}^{(1)} = \sum\limits_{k = 0}^N {\;\varpi _{N,k}^{(0.5)}{\mkern 1mu} {\mathcal{L}}_{B,n,i}^{(\alpha )}\left( {x_{N,k}^{(0.5)}; - 1,x_{n,n + 1}^{(\alpha )}} \right)} ,\quad i = 0, \ldots ,n,
\end{equation}
where $p_{B,n + 1,i}^{(1)}, i = 0, \ldots, n$, are the elements of the additional row of the barycentric GIM, ${\mathbf{P}}_{B,n+1}^{(1)}$, corresponding to the point $1$. Since $\hat x_{N,k}^{(0.5)} = x_{N,k}^{(0.5)}\; \forall k$ in this case, the Sufficient Condition \eqref{eq:rarecase3} is now simplified to
\begin{equation}
	\left| {x_{N,k}^{(0.5)} - x_{n,i}^{(\alpha )}} \right| > \varepsilon \;\forall i;k.
\end{equation}
Since $n > N \; \forall n \ge 1$, both LG and GG points share the zero value if both $n$ and $N$ are even. For instance, for $n = 4$, we find that ${x_{2,1}^{(0.5)} = x_{4,2}^{(\alpha)}} = 0\, \forall \alpha > -1/2$, and again overflow occurs. To overcome this issue we increase the number of LG points as discussed before so that $N$ is replaced with $N + 1$. This convenient technique is adopted in Algorithm \ref{sec:COARBGFTP1:alg1matrix}; cf. Appendix \ref{appendix:CAP}. Notice here that Algorithm \ref{sec:COARBGFTP1:alg1matrix} is implemented assuming ${\mathbf{P}}_B^{(1)}$ for the GG collocation points is not required. If this is not the case, then the barycentric weights are already computed using Algorithm \ref{sec:COTBGFTGSOP1:alg1matrix}, and we can safely remove this partial procedure to gain more efficiency when calculating the row barycentric GIM corresponding to the point $1$. This is depicted in Algorithm \ref{sec:MCOARBGFTP1:alg1matrix}.

\begin{rem}
The barycentric GIM can be slightly modified to work for any set of arbitrary points $\{x_k\}_{k = 0}^n$ by following Algorithm \ref{sec:COTBGFAASOP:alg1matrix} in Appendix \ref{appendix:CAP}.
\end{rem}

\subsection{Computational algorithms for the optimal barycentric GIM}
\label{subsec:CAFTOBG1}
Similar to the work of \cite{Elgindy2013}, the computational cost of $\mathbf{P}_{OB}^{(1)}$ can be reduced significantly for arbitrarily symmetric set of points $\{x_k\}_{k=0}^n$ if $m$ is even. Indeed, in this case, $\int_{ - 1}^{{x_k}} {G_{m + 1}^{(\alpha _k^*)}(x)\,dx}  = \int_{ - 1}^{ - {x_k}} {G_{m + 1}^{(\alpha _k^*)}(x)\,dx}\, \forall k$; thus $\left\{ {\alpha _k^*} \right\}_{k = 0}^{\left\lceil {n/2} \right\rceil  - 1} = \left\{ {\alpha _{n - k}^*} \right\}_{k = 0}^{\left\lceil {n/2} \right\rceil  - 1}$, which implies that $\left\{ {z_{m,k,i}^{(\alpha _k^*)},\xi _{m,k,i}^{(\alpha _k^*)}} \right\}_{i = 0}^m = \left\{ {z_{m,n - k,i}^{(\alpha _{n - k}^*)},\xi _{m,n - k,i}^{(\alpha _{n - k}^*)}} \right\}_{i = 0}^m$, for $k = 0, \ldots, {\left\lceil {n/2} \right\rceil }-1$. Hence, $\left\{ {z_{m,k,i}^{(\alpha _k^*)},\xi _{m,k,i}^{(\alpha _k^*)}} \right\}_{i = 0}^m$ can be stored for the first $({\left\lceil {n/2} \right\rceil } - 1)$ iterations, and invoked later in the next iterations. Algorithms \ref{sec1:alg:1} and \ref{sec1:alg:2} are two efficient algorithms for the construction of the optimal barycentric GIM for any non-symmetric/symmetric set of integration points, respectively; cf. Appendix \ref{appendix:CAP}. For a large number of expansion terms, Chebyshev and Legendre quadratures often behave optimally as specified by the used error norm; cf. \cite{Elgindy2013}. Therefore, both algorithms provide the user with the flexibility to choose two parameter inputs $m_{\max}$ and $\alpha_a$ at which the algorithms construct the Chebyshev/Legendre quadratures instead. Moreover, the parameter input $\alpha_b$ adds further stability to the algorithms in the occasions, where $\alpha_k^*$ lies in the critical interval $(-0.5, -0.5 + \varepsilon)$. The parameter input $r$ is ideally chosen from the interval $[1, 2]$ to hamper the extrapolatory effect of the optimal Gegenbauer quadrature caused by the narrowing behavior of the Gegenbauer weight function for increasing values of $\alpha$; cf. \cite{Elgindy2013}. For a non-symmetric set of integration nodes $\{x_k\}_{k=0}^n$ with $m \le m_{\max}: m$ is even and $1 \in \{x_k\}_{k=0}^n$, $M$ should be replaced with $(M + 1)$ in Algorithm \ref{sec1:alg:1} if $M$ is even, as we discussed earlier. This procedure should also be carried out in Algorithm \ref{sec1:alg:2} for symmetric sets of integration nodes with $m \le m_{\max}$ and $1 \in \{x_k\}_{k=0}^n$, since $m$ is always an even integer in such cases.

Since Algorithms \ref{sec1:alg:1} and \ref{sec1:alg:2} work for any arbitrary set of nodes $\{x_k\}_{k=0}^n$, the Sufficient Condition \eqref{eq:rarecase3} for $m > m_{\max}$ now becomes
\begin{equation}\label{eq:rarecase3OB}
	\left| {x_{M,s}^{(0.5)} - \frac{{1 - {x_k} + 2x_{m,i}^{({\alpha _a})}}}{{1 + {x_k}}}} \right| > \varepsilon \;\forall i,s;k,
\end{equation}
which can be checked using Algorithm \ref{sec:TTSKIMO1:alg1matrix}. We refer to the set,
\begin{equation}
	\mathbb{F}_{OB,1}^{(m_{\max})} = \left\{(m, \alpha_a):m > m_{\max}; {\text{the Sufficient Condition }}\eqref{eq:rarecase3OB}\text{ is always satisfied}\right\},
\end{equation}
by the ``optimal barycentric GIM feasible set for $m > m_{\max}$.'' It is important here to mention that in Gegenbauer collocation schemes, the dynamics is often enforced at the GG points; cf. \cite{Elgindy2013,Elgindy2013a,Elgindy2016,Elgindy2012c,Elgindy2012d}. Therefore, the input set of integration nodes $\{ {x_k}\} _{k = 0}^n$ in Algorithm \ref{sec1:alg:2} is frequently taken as the set of GG points $\{x _{n,k}^{(\alpha )}\}_{k = 0}^n$. Hence, the sufficient condition for constructing $\mathbf{P}_{OB}^{(1)}$ for $m \le m_{\max}$ reads
\begin{equation}\label{eq:rarecase3OBmlmmax}
	\left| {x_{M,s}^{(0.5)} - \frac{{1 - x_{n,k}^{(\alpha )} + 2\,z_{m,k,i}^{(\alpha _k^*)}}}{{1 + x_{n,k}^{(\alpha )}}}} \right| > \varepsilon \;\forall i,s;k.
\end{equation}
We refer to the set,
\begin{equation}
	\mathbb{F}_{OB,2}^{(m_{\max})} = \left\{(n, m, \alpha):m \le m_{\max}; {\text{the Sufficient Condition }}\eqref{eq:rarecase3OBmlmmax}\text{ is always satisfied}\right\},
\end{equation}
by the ``optimal barycentric GIM feasible set for $m \le m_{\max}$.'' The above argument implies that the set,
\begin{equation}
	\mathbb{F}_{OB}^{({m_{\max }})} = \left\{ \begin{array}{l}
\mathbb{F}_{OB,1}^{({m_{\max }})},\quad m > {m_{\max }},\\
\mathbb{F}_{OB,2}^{({m_{\max }})},\quad m \le {m_{\max }},
\end{array} \right.
\end{equation}
is the optimal barycentric GIM feasible set.
\begin{rem}
The phrase `If $M = N$ then set $\mathbf{P} = \hat {\mathbf{P}}$ with $\alpha = 0.5$;' in \cite[Algorithms 2.1 \& 2.2]{Elgindy2013} should be carried out with each $i$th-indexed GG point $x_{i}$ replaced with the corresponding arbitrary point while keeping the $j$th-indexed GG points $x_{j}$ the same. This should be straightforward and the implementation should follow that of Algorithm \ref{sec:COTBGFAASOP:alg1matrix} in Appendix \ref{appendix:CAP}. We have also noticed a typo in \cite[Algorithm 2.2]{Elgindy2013}, where the phrase `$N$ is even' in the input should be replaced with `$M$ is even.' In turns, the condition `$i \le N/2$' in Step 4 should be correctly replaced with `$i \le \left\lfloor {N/2} \right\rfloor$' to cover both cases when $N$ is even or odd.
\end{rem}

\begin{rem}
Since most of the current state of the art software such as MATLAB are optimized for operations involving matrices and vectors, all of the proposed algorithms are vectorized to run much faster than the corresponding codes containing loops.
\end{rem}

\section{Numerical examples}
\label{sec: NEAA1}
In this section, we apply the developed barycentric GIMs and quadratures on three well-studied test examples with known exact solutions in the literature. Comparisons with other competitive numerical schemes are presented to assess the accuracy and efficiency of the current work. The numerical experiments were conducted on a personal laptop equipped with an Intel(R) Core(TM) i7-2670QM CPU with 2.20GHz speed running on a Windows 10 64-bit operating system.

\paragraph{\textbf{Example 1}} Consider the following Fredholm integro-differential equation:
\begin{equation}\label{sec3:eq:test7}
  y'(x) - y(x) - \int_0^1 {{e^{s  x}}\; y(s)\;  ds}  = \frac{{1 - {e^{x + 1}}}}{{x + 1}},\quad  y(0) = 1,
	\end{equation}
with the exact solution $y(x) = e^x$. This problem was previously solved by \cite{Elgindy2013a} using a hybrid Gegenbauer integration method (HGIM). Following the numerical scheme developed by \cite{Elgindy2013a} together with the obtained barycentric GIMs results in the following algebraic system of linear equations:
\begin{equation}\label{sec3:eq:test7ls}
{w_j} - \sum\limits_{i = 0}^n {\left( {p_{B,j,i}^{(1)} + \sum\limits_{k = 0}^n {p_{B,n + 1,i}^{(1)}p_{B,j,k}^{(1)}{e^{x_{n,k}^{(\alpha )}x_{n,i}^{(\alpha )}}}} } \right){\mkern 1mu} {w_i} - \sum\limits_{i = 0}^m {p_{OB,j,i}^{(1)}\;{r_{j,i}}}  - 1 = 0} ,\quad j = 0, \ldots ,n,
\end{equation}
where ${w_j} \approx y\left(x_{n,j}^{(\alpha )}\right)\,\forall j; r(x) = \left(1 - {e^{x + 1}}\right)/(x + 1).$ We refer to the present method by the hybrid barycentric Gegenbauer integration method (HBGIM). We implemented the developed algorithms for the set of feasible $3$-tuples 
$\{(10,14,\alpha)\}_{\alpha = -0.4(0.1)}^1$ $\subset \mathbb{F}_{OB}^{(20)}$.
The resulting algebraic linear system of equations were solved using MATLAB ``mldivide'' Algorithm provided with MATLAB V. R2014b (8.4.0.150421). Figure \ref{FEx1} shows the maximum absolute errors (MAEs) of the present method. As can be observed from the results, the $\max_{\alpha = -0.4:0.1:1} \text{MAE}$ of the present method is about $9.948 \times 10^{-14}$ obtained at $\alpha = 1$ versus approximately $4.201 \times 10^{-13}$ for the HGIM obtained at $\alpha = -0.4$. The best MAE $\approx 5.329 \times 10^{-15}$ was obtained at $\alpha = 0.7$ in $0.017$ seconds. The reported $2$-norm condition number, ${\kappa _2}$, of the linear system is approximately bounded by $35.27 \le {\kappa _2} \le 42.87 \;\forall \alpha$.

\begin{figure}[ht]
\centering
\includegraphics[scale=0.7]{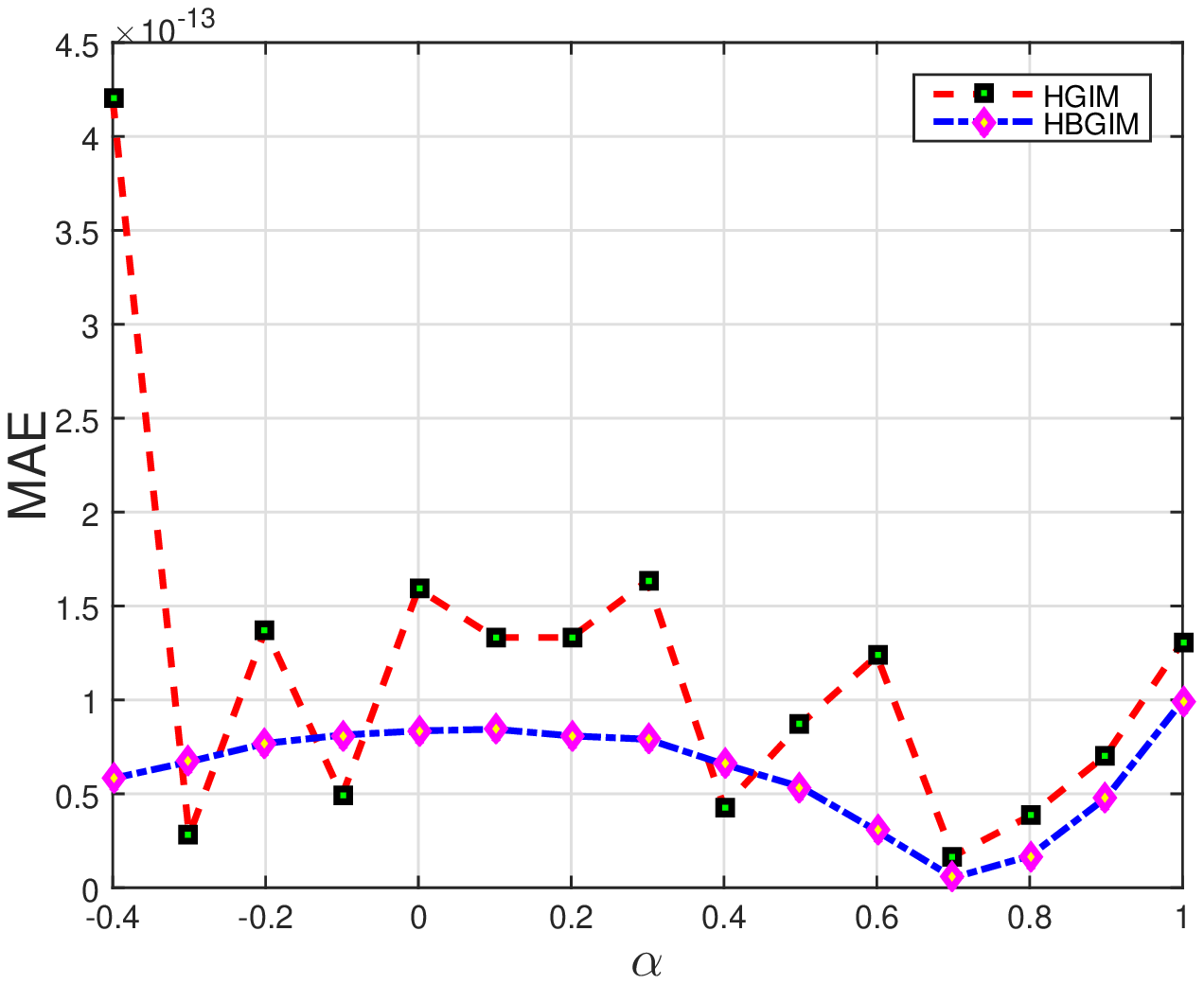}
\caption{The figure shows the MAEs of the HGIM versus the HBGIM for the set of feasible $3$-tuples $\{(10,14,\alpha)\}_{\alpha = -0.4(0.1)}^1$ $\subset \mathbb{F}_{OB}^{(20)}$.}
\label{FEx1}
\end{figure}

\paragraph{\textbf{Example 2}} Consider the following nonlinear boundary value problem:
\begin{equation}\label{sec:eq:v1}
- {c_\alpha }\alpha \left( {\int_0^1 {u(t)\,dt} } \right)\,u''(x) + {u^5}(x) = 0,\;\;0 < x < 1,
	\end{equation}
	such that $u(0) = 1, u(1) = \sqrt{2}/2, {c_\alpha } = 4/\left( {3\,\alpha \left( {2\sqrt 2  - 2} \right)} \right);\alpha (q) = 1/q\,\forall q > 0.$ The exact solution is $u(x) = 1/\sqrt{1 + x}$ \cite{Themistoclakis2015}. Notice here that the coefficient of the second derivative of the unknown solution $u$ depends upon the integral of $u$ itself, which in turn depends on the whole solution domain $[0, 1]$ rather than on a single point. Therefore, the boundary value problem is classified as a ``nonlocal'' nonlinear problem. This problem was studied by \cite{Themistoclakis2015} and was solved using an iterative scheme (IS). The present HBGIM results in the following nonlinear algebraic system of equations:
\begin{equation}\label{sec3:eq:testv1}
P_B^{(2)}{U^{(5)}} - P_{B,n + 1}^{(2)}{U^{(5)}}X + \left[ {4\left( {4 - 3\sqrt 2 } \right)X - 8\left( {\sqrt 2  - 1} \right)\left( {U - 1} \right)} \right]\oslash\left( {3P_{B,n + 1}^{(1)}U} \right),
\end{equation}
where $U = {\left[ {{U_0},{U_1}, \ldots ,{U_n}} \right]^T} \approx {\left[ {u\left( {x_{n,0}^{(\alpha )}} \right),u\left( {x_{n,1}^{(\alpha )}} \right), \ldots ,u\left( {x_{n,n}^{(\alpha )}} \right)} \right]^T}, X = {\left[ {x_{n,0}^{(\alpha )},x_{n,1}^{(\alpha )}, \ldots ,x_{n,n}^{(\alpha )}} \right]^T};$
\[{U^{(5)}} = \underbrace {U \circ U \ldots  \circ U}_{5 - {\text{times}}},\]
$\circ$ and $\oslash$ denote the Hadamard product and division, respectively. We implemented the developed algorithms for the set of feasible pairs $\{ (n,\alpha ):n = 6,7,9;\alpha  =  - 0.4(0.1)1\}  \subset \mathbb{F}_B$. The nonlinear system \eqref{sec3:eq:testv1} was solved using MATLAB ``fsolve'' solver with ``TolX'' set at $\varepsilon_{\text{mach}}$. Figure \ref{fig:TEX2} shows the MAEs of the present method while Figure \ref{fig:TEX22} shows the number of correct digits ${\text{c}}{{\text{d}}_n}: =  - {\log _{10}}\left[ {{{\max }_{0 \le i \le n}}\left| {u({x_i}) - {U_i}} \right|} \right]$ obtained in each case. Clearly, the present numerical scheme achieves a very rapid convergence rate using relatively small number of barycentric quadratures terms. For instance, the IS of \cite{Themistoclakis2015} requires $257$ points to obtain $6$ correct digits versus only $10$ GG points to achieve a larger number of correct digits for the present method with an elapsed time of about $0.01$ seconds for all experimental values of $\alpha$. Both numerical tests confirm the efficiency of the proposed numerical schemes.

\begin{figure}[ht]
\centering
\includegraphics[width=13cm,height=8cm]{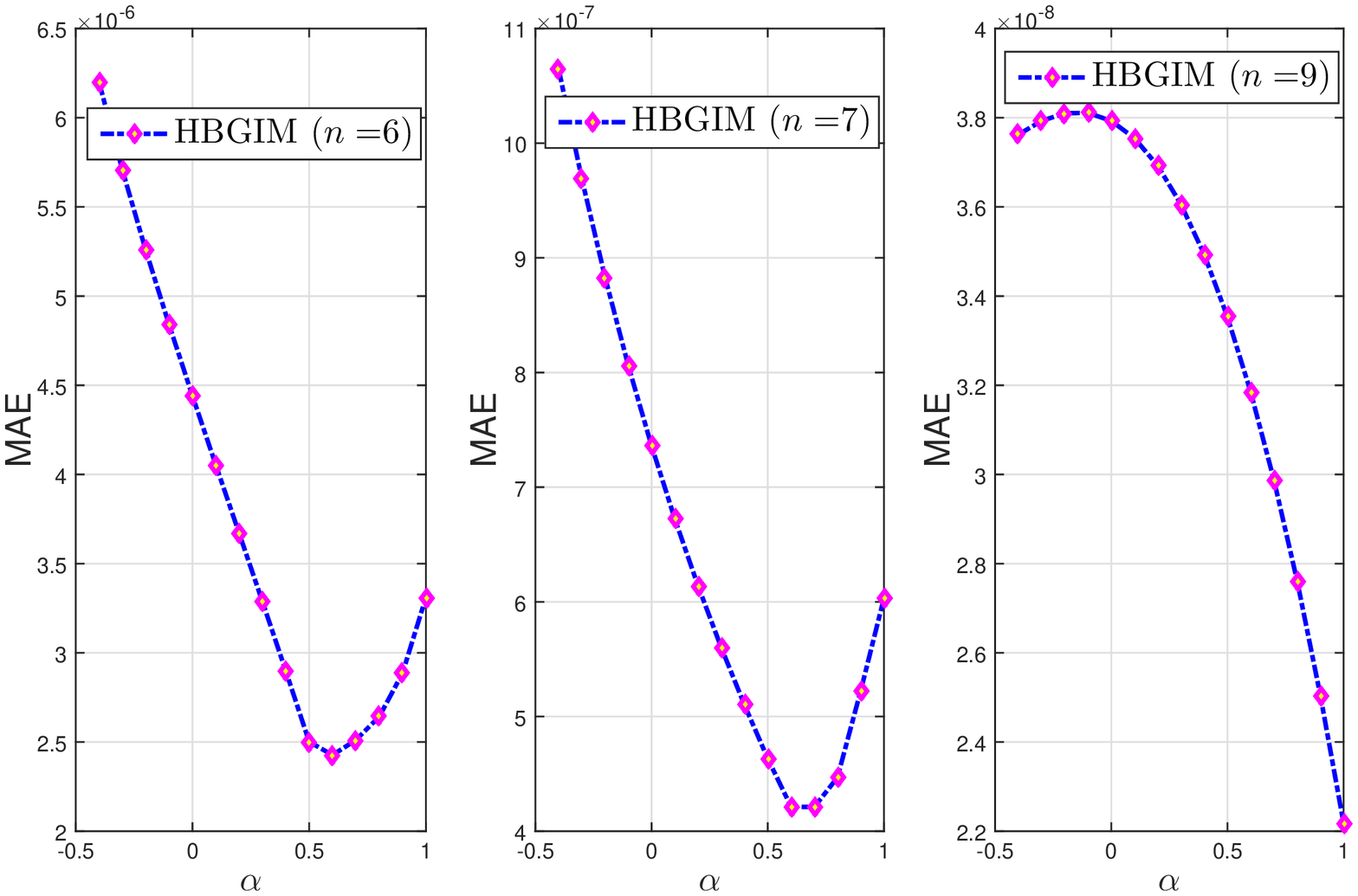}
\caption{The MAEs of the HBGIM for $\{ (n,\alpha ):n = 6,7,9;\alpha  =  - 0.4(0.1)1\}  \subset \mathbb{F}_B$.}
\label{fig:TEX2}
\end{figure}

\begin{figure}[ht]
\centering
\includegraphics[width=16cm,height=8cm]{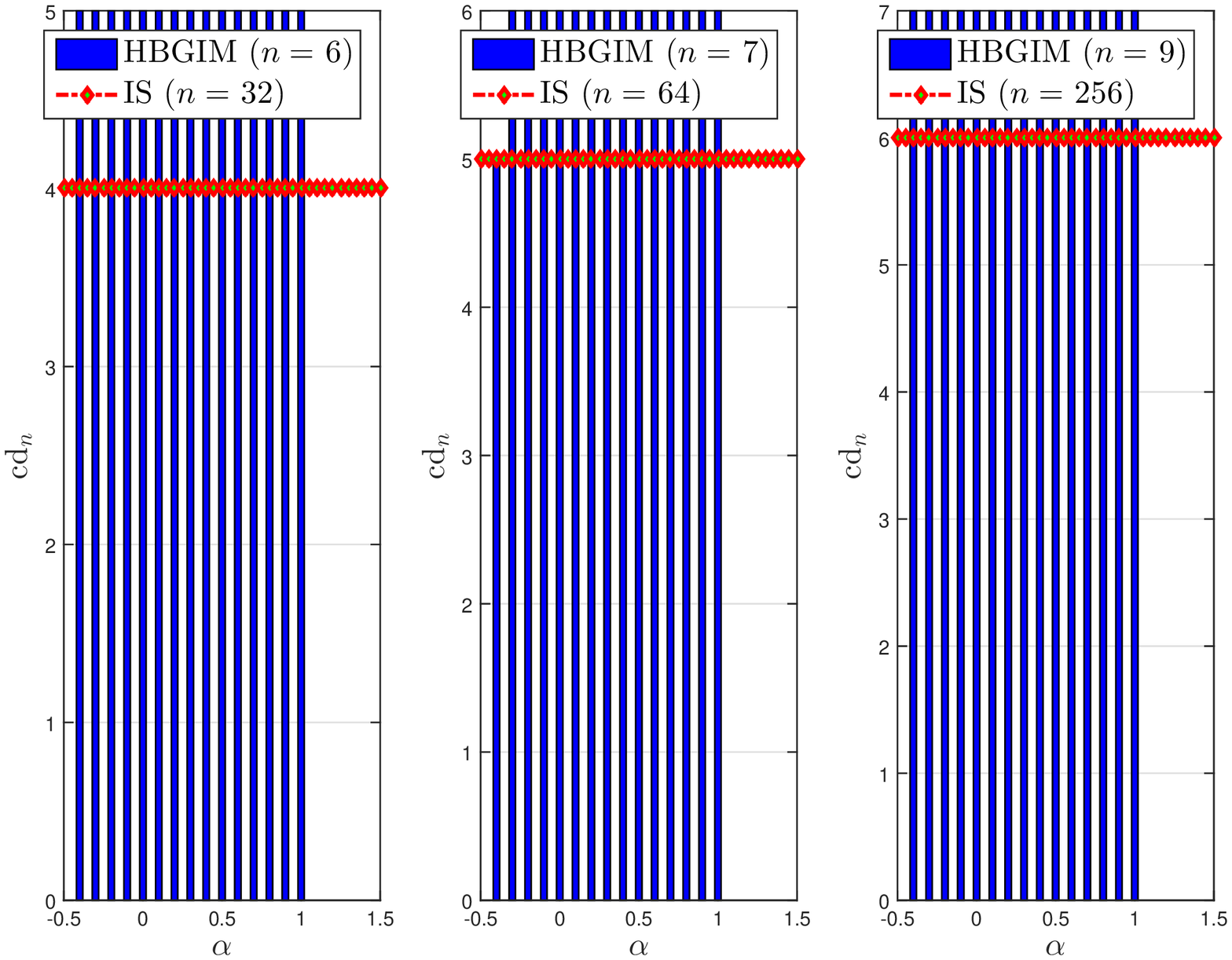}
\caption{The cd$_n$ for the HBGIM and the IS of \cite{Themistoclakis2015}.}
\label{fig:TEX22}
\end{figure}
\paragraph{\textbf{Example 3}} Consider the following second-order one-dimensional hyperbolic telegraph equation,
\begin{equation}
	{u_{tt}}(x,t) + 2{\pi ^2}{u_t}(x,t) + {\pi ^2}u(x,t) = {u_{xx}}(x,t) + {e^{ - t}}\sin (\pi x),\quad 0 \le x \le 1, t > 0,
\end{equation}
provided with the initial conditions,
\begin{align}
	u(x,0) &= \sin (\pi x),\\
	{u_t}(x,0) &= - \sin (\pi x),
\end{align}
and the following Dirichlet boundary conditions,
\begin{align}
	u(0,t) = 0;\\
	u(1,t) = 0.
\end{align}
The exact solution of the above problem is $u(x,t) = {e^{ - t}}\sin (\pi x)$ \cite{Luo2013}. We solved the problem using the numerical scheme developed by \cite{Elgindy2016} together with the obtained barycentric GIMs. The developed algorithms were carried out using the feasible $3$-tuple $(8,8,0) \in \mathbb{F}_{OB}^{(20)}$.
The plots of the exact solution, its bivariate shifted Gegenbauer interpolant ${P_{n,n}}u(x,t)$ (see \cite{Elgindy2016}), and the absolute error function
\begin{equation}
	{E}_{n,n}(x,t) = \left| u(x,t) - {P_{n,n}}u(x,t) \right|, \quad (x,t) \in D_{1,1}^2,
\end{equation}
are shown in Figure \ref{Ex3err}, where $D_{1,1}^2 = [0,1] \times [0,1]$. A comparison with \cite{Luo2013}'s fourth-order method based on cubic Hermite interpolation \cite{Luo2013} and the present method is also shown in Table \ref{sec:numerical:tab:Ex3}. The plots and the numerical comparisons show the power of the present method as proven in the recognized rapid convergence rates and the produced errors with very small magnitudes using relatively small number of expansion terms. For instance, \cite{Luo2013}'s fourth-order method \cite{Luo2013} yields a MAE of order $10^{-08}$ using $49 \times 49$ collocation points in both directions. Conversely, a MAE of order $10^{-09}$ is achieved by the present method using only $11 \times 11$ collocation points.
\begin{figure}[ht]
\centering
\includegraphics[scale=0.6]{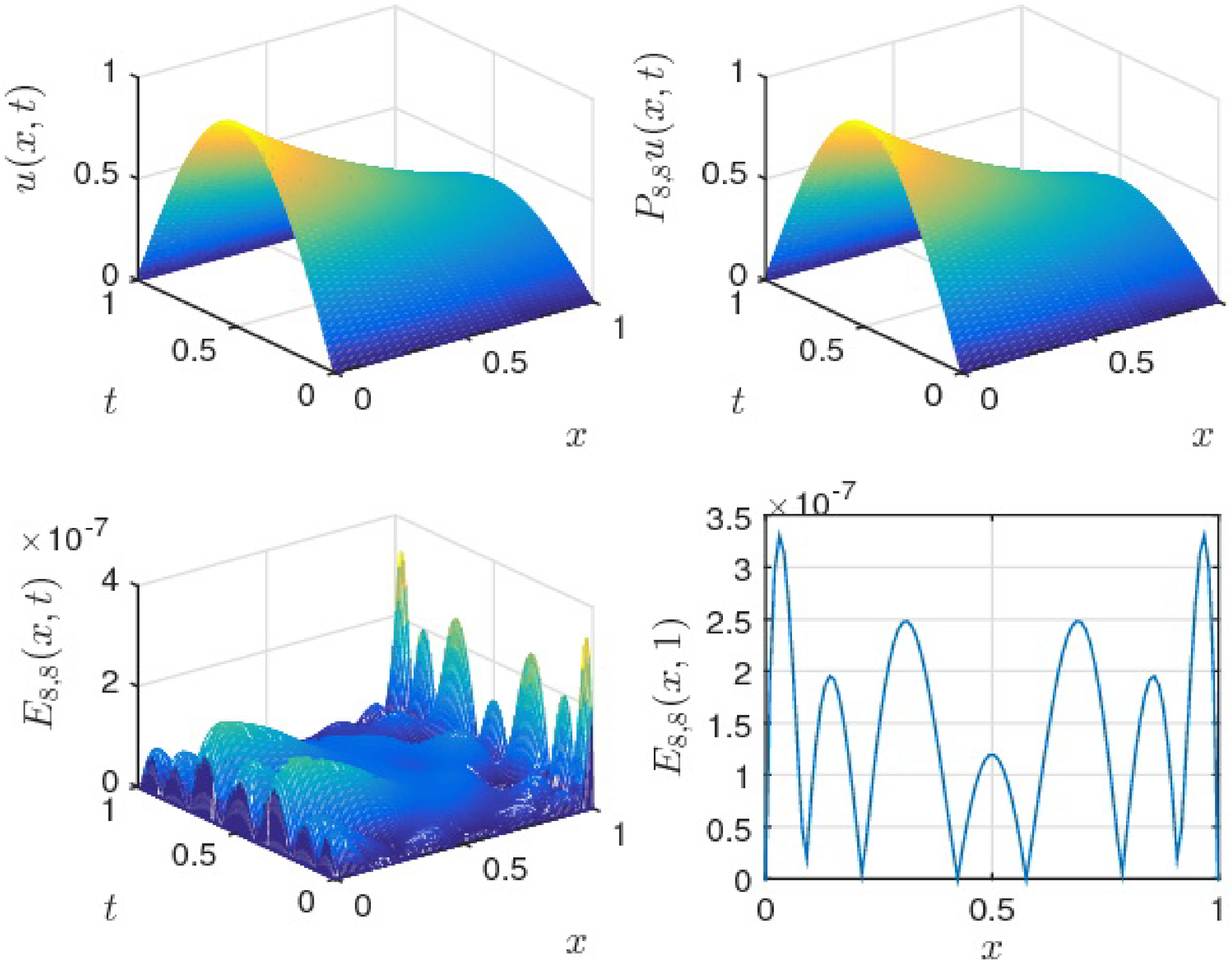}
\caption{The numerical simulation of the present method on Example 3. The figure shows the plots of the exact solution $u(x,t)$  on $D_{1,1}^2$ (upper left), its approximation $P_{8,8}u(x,t)$ (upper right), the absolute error function ${E}_{8,8}(x,t)$ (lower left), and its values at the final time, ${E}_{8,8}(x,1)$ (lower right). The barycentric matrix and its optimal partner are both square of size $9$. The plots were generated using $100$ linearly spaced nodes in the $x$- and $t$-directions from $0$ to $1$.}
\label{Ex3err}
\end{figure}
\begin{table}[ht]
\begin{center} 
\scalebox{0.6}{
\resizebox{\textwidth}{!}{ %
\begin{tabular}{cc}
\toprule
\multicolumn{2}{c}{\textbf{Example 3}} \\
\cmidrule(r){1-2}
\textbf{\cite{Luo2013}'s method \cite{Luo2013}} & \textbf{Present method}\\
$(h)/(MAE); k = 1/48$ & $(n)/(MAE)$\\
\cmidrule(r){1-2}
$(1/12)/(3.702 \times 10^{-06})$ & $(8)/(3.303 \times 10^{-07})$\\
$(1/24)/(2.310 \times 10^{-07})$ & $(10)/(1.596 \times 10^{-09})$\\
$(1/48)/(1.451 \times 10^{-08})$ & $(12)/(5.145 \times 10^{-12})$\\
$(1/96)/(9.952 \times 10^{-10})$ & $(14)/(1.849 \times 10^{-14})$\\
\bottomrule
\end{tabular}
}}
\caption{A comparison of Example 3 between \cite{Luo2013}'s fourth-order method \cite{Luo2013} and the current method. The table lists the MAEs at $t = 1$. The results of \cite{Luo2013}'s method \cite{Luo2013} are quoted from Ref. \cite{Luo2013}.}
\label{sec:numerical:tab:Ex3}
\end{center}
\end{table}
\section{Conclusion and discussion}
\label{conc}
Novel GIMs and quadratures are developed based on the barycentric representation of Lagrange interpolating polynomials and the explicit barycentric weights for the GG points. The established GIMs and quadratures were optimized following the method of \cite{Elgindy2013}. The present numerical scheme leads to a reduction in the computational cost and time complexity while preserving the order of accuracy achieved by \cite{Elgindy2013}. The proposed HBGIM is a stable numerical scheme, which generally leads to well-conditioned systems. The numerical experiments confirm the stability, high-order accuracy, and efficiency of the proposed HBGIM and developed computational algorithms. The presented algorithms and numerical scheme provide easy yet strong numerical tools, which can be effectively carried out for the solution of a wide variety of problems. For instance, the current work laid the foundation of the exponentially-convergent numerical method of \cite{Elgindy2016a} for solving optimal control problems governed by parabolic distributed parameter systems, and was the crucial element in deriving a high-order adaptive spectral element  algorithm for solving general nonlinear optimal control problems exhibiting smooth/nonsmooth solutions using composite shifted Gegenbauer grids; cf. \cite{Elgindy2016b}. Other possible future directions may include the extension of the current work to handle problems in multiple-space dimensions and the development of sparse/banded integration matrices.
\section{Acknowledgments}
I would like to express my deepest gratitude to the editor for carefully handling the article, and the anonymous reviewers for their careful reading, constructive comments, and useful suggestions, which shaped the article into its final form.
\appendix
\section{Pseudocodes for the developed computational algorithms}
\label{appendix:CAP}
\begin{algorithm}[ht]
\renewcommand{\thealgorithm}{1}
\caption{Construction of the barycentric GIM for the GG set of integration points}
\label{sec:COTBGFTGSOP1:alg1matrix}
\begin{algorithmic}
\REQUIRE Positive integer $n$; the set of GG points and quadrature weights, $\left\{ {x_{n,i}^{(\alpha )},\varpi _{n,i}^{(\alpha )}} \right\}_{i = 0}^n$.\\
\STATE $\xi _{n,i}^{(\alpha )} \leftarrow {( - 1)^i}{\mkern 1mu} \sin \left( {{{\cos }^{ - 1}}\left( {x_{n,i}^{(\alpha )}} \right)} \right)\sqrt {\varpi _{n,i}^{(\alpha )}} ,\quad i = 0, \ldots ,n.$
\STATE $N \leftarrow \left\lceil {(n - 1)/2} \right\rceil$.
\STATE Calculate the set of LG points and quadrature weights, $\left\{ {x_{N,i}^{(0.5)},\varpi _{N,i}^{(0.5)}} \right\}_{i = 0}^N$.\\
\STATE ${\mathbf{P}}_B^{(1)} \leftarrow {\mathbf{O}}$.
\FOR{$j = 0$ \TO $n$ }
\STATE{$\hat x_{N,k}^{(0.5)} \leftarrow \left( {\left( {x_{n,j}^{(\alpha )} + 1} \right)x_{N,k}^{(0.5)} + x_{n,j}^{(\alpha )} - 1} \right)/2,\quad k = 0, \ldots ,N$.}
\FOR{$k = 0$ \TO $N$ }
\STATE{${\mu _i} \leftarrow \xi _{n,i}^{(\alpha )}/\left( {\hat x_{N,k}^{(0.5)} - x_{n,i}^{(\alpha )}} \right),\quad i = 0, \ldots ,n.$\\
$\nu \leftarrow \sum\nolimits_{i = 0}^n {{\mu _i}}$.\\
${p_{B,j,i}} \leftarrow {p_{B,j,i}} + \varpi _{N,k}^{(0.5)}{\mu _i}/\nu ,\quad i = 0, \ldots ,n.$}
\ENDFOR
\STATE ${p_{B,j,i}} \leftarrow \left( {x_{n,j}^{(\alpha )} + 1} \right){p_{B,j,i}}/2,\quad i = 0, \ldots ,n.$
\ENDFOR
\RETURN{${\mathbf{P}}_B^{(1)}$}
\end{algorithmic}
\end{algorithm}
\begin{algorithm}[ht]
\renewcommand{\thealgorithm}{2}
\caption{Calculation of the barycentric Gegenbauer quadrature for the GG set of integration points}
\label{sec:COTBGQFTGSOP1:alg2matrix}
\begin{algorithmic}
\REQUIRE Positive integer $n$; the set of GG points and quadrature weights, $\left\{ {x_{n,i}^{(\alpha )},\varpi _{n,i}^{(\alpha )}} \right\}_{i = 0}^n$; real-valued function $f$.\\
\STATE Construct ${\mathbf{P}}_B^{(1)}$ using Algorithm \ref{sec:COTBGFTGSOP1:alg1matrix}.\\
\STATE ${\mathbf{I}}_n^{(\alpha )} \leftarrow {\mathbf{P}}_B^{(1)} \cdot {\left( {f_{n,0}^{(\alpha )},f_{n,1}^{(\alpha )}, \ldots ,f_{n,n}^{(\alpha )}} \right)^T}$.
\RETURN{${\mathbf{I}}_n^{(\alpha )}$}
\end{algorithmic}
\end{algorithm}
\begin{algorithm}[ht]
\renewcommand{\thealgorithm}{3}
\caption{Modified construction of the barycentric GIM for the GG set of integration points}
\label{sec:MCOTBGFTGSOP1:alg1matrix}
\begin{algorithmic}
\REQUIRE Positive integer $n$; the set of GG points and quadrature weights, $\left\{ {x_{n,i}^{(\alpha )},\varpi _{n,i}^{(\alpha )}} \right\}_{i = 0}^n$; relatively small positive number $\varepsilon$.\\
\STATE $\xi _{n,i}^{(\alpha )} \leftarrow {( - 1)^i}{\mkern 1mu} \sin \left( {{{\cos }^{ - 1}}\left( {x_{n,i}^{(\alpha )}} \right)} \right)\sqrt {\varpi _{n,i}^{(\alpha )}} ,\quad i = 0, \ldots ,n.$
\STATE $N \leftarrow \left\lceil {(n - 1)/2} \right\rceil$.
\STATE Calculate the set of LG points and quadrature weights, $\left\{ {x_{N,i}^{(0.5)},\varpi _{N,i}^{(0.5)}} \right\}_{i = 0}^N$.\\
\STATE ${\mathbf{P}}_B^{(1)} \leftarrow {\mathbf{O}}$
\STATE $L_{i} \leftarrow 1,\quad i = 0, \ldots, n$.
\FOR{$j = 0$ \TO $n$ }
\STATE{$\hat x_{N,k}^{(0.5)} \leftarrow \left( {\left( {x_{n,j}^{(\alpha )} + 1} \right)x_{N,k}^{(0.5)} + x_{n,j}^{(\alpha )} - 1} \right)/2,\quad k = 0, \ldots ,N$.}
\FOR{$k = 0$ \TO $N$ }
\STATE{${d_i} = \left( {\hat x_{N,k}^{(0.5)} - x_{n,i}^{(\alpha )}} \right),\quad i = 0, \ldots ,n.$\\
$I \leftarrow \left\{ {i:\;\left| {{d_i}} \right| > \varepsilon } \right\}.$\\
${\mu _l} \leftarrow \xi _{n,l}^{(\alpha )}/d_l, \quad l \in I.$\\
$\nu \leftarrow \sum\nolimits_{l = 0}^{\aleph _0} {{\mu _l}}$.\quad \COMMENT{$\aleph _0$ denotes the cardinal number of $I$.}\\
${L_l} = {\mu _l}/\nu ,\quad l \in I.$\\
${p_{B,j,i}} \leftarrow {p_{B,j,i}} + \varpi _{N,k}^{(0.5)}{L_i} ,\quad i = 0, \ldots ,n.$}\\
${L_l} = 1,\quad l \in I.$
\ENDFOR
\STATE ${p_{B,j,i}} \leftarrow \left( {x_{n,j}^{(\alpha )} + 1} \right){p_{B,j,i}}/2,\quad i = 0, \ldots ,n.$
\ENDFOR
\RETURN{${\mathbf{P}}_B^{(1)}$}
\end{algorithmic}
\end{algorithm}
\begin{algorithm}[ht]
\renewcommand{\thealgorithm}{4}
\caption{Testing the sufficient condition \eqref{eq:rarecase3} for the construction of the barycentric GIM for the GG set of integration points}
\label{sec:TTSCFTCOTBGFTGSOP:alg1matrix}
\begin{algorithmic}
\REQUIRE Positive integer $n$; the set of GG points, $\left\{ x_{n,i}^{(\alpha )},\right\}_{i = 0}^n$; relatively small positive number $\varepsilon$.\\
\STATE $N \leftarrow \left\lceil {(n - 1)/2} \right\rceil$.
\STATE Calculate the set of LG points, $\left\{ x_{N,i}^{(0.5)} \right\}_{i = 0}^N$.\\
\FOR{$j = 0$ \TO $n$ }
\FOR{$k = 0$ \TO $N$ }
\STATE $J \leftarrow \left\{ {{J_i}:{J_i} = \left\{ \begin{array}{l}
1,\quad \left| {1 + x_{N,k}^{(0.5)} - 2\left( {1 + x_{n,i}^{(\alpha )}} \right)/\left( {1 + x_{n,j}^{(\alpha )}} \right)} \right| \le \varepsilon ,\\
0,\quad {\text{otherwise,}}
\end{array} \right.i = 0, \ldots ,n} \right\}.$
\IF[$\aleph _0$ denotes the cardinal number of $J$.]{$\sum\nolimits_{i = 0}^{{\aleph _0}} {{J_i}}  > 0$}
\STATE{\RETURN{``The test fails.''}}
\ENDIF
\ENDFOR
\ENDFOR
\RETURN{``The test succeeds.''}
\end{algorithmic}
\end{algorithm}
\begin{algorithm}[ht]
\renewcommand{\thealgorithm}{5}
\caption{Testing the sufficient condition for the construction of the optimal barycentric GIM for $m > m_{\max}$}
\label{sec:TTSKIMO1:alg1matrix}
\begin{algorithmic}
\REQUIRE Positive integers $m, l$; the set of GG points, $\left\{ x_{m,i}^{(\alpha_a )},\right\}_{i = 0}^m$; the set of integration points, $\{ x_{k}\}_{k = 0}^l$; relatively small positive number $\varepsilon$.\\
\STATE $M \leftarrow \left\lceil {(m - 1)/2} \right\rceil$.
\STATE Calculate the set of LG points, $\left\{ x_{M,s}^{(0.5)} \right\}_{s = 0}^M$.\\
\FOR{$i = 0$ \TO $m$ }
\FOR{$s = 0$ \TO $M$ }
\STATE $J \leftarrow \left\{ {{J_k}:{J_k} = \left\{ {\begin{array}{*{20}{l}}
{1,\quad \left| {x_{M,s}^{(0.5)} - ({1 - {x_k} + 2x_{m,i}^{({\alpha _a})}})/({{1 + {x_k}}})} \right| \le \varepsilon ,}\\
{0,\quad {\text{otherwise,}}}
\end{array}} \right.k = 0, \ldots ,l} \right\}.$
\IF[$\aleph _0$ denotes the cardinal number of $J$.]{$\sum\nolimits_{k = 0}^{{\aleph _0}} {{J_k}}  > 0$}
\STATE{\RETURN{``The test fails.''}}
\ENDIF
\ENDFOR
\ENDFOR
\RETURN{``The test succeeds.''}
\end{algorithmic}
\end{algorithm}
\begin{algorithm}[ht]
\renewcommand{\thealgorithm}{6}
\caption{Construction of a row barycentric GIM corresponding to the point $1$}
\label{sec:COARBGFTP1:alg1matrix}
\begin{algorithmic}
\REQUIRE Positive integer $n$; the set of GG points and quadrature weights, $\left\{ {x_{n,i}^{(\alpha )},\varpi _{n,i}^{(\alpha )}} \right\}_{i = 0}^n$.\\
\ENSURE ${\mathbf{P}}_B^{(1)}$ is not required.
\STATE $\xi _{n,i}^{(\alpha )} \leftarrow {( - 1)^i}{\mkern 1mu} \sin \left( {{{\cos }^{ - 1}}\left( {x_{n,i}^{(\alpha )}} \right)} \right)\sqrt {\varpi _{n,i}^{(\alpha )}} ,\quad i = 0, \ldots ,n.$
\STATE $N \leftarrow \left\lceil {(n - 1)/2} \right\rceil$.
\IF{${\text{mod}}(n,2) = 0\; \wedge \;\text{mod}(N,2) = 0$}
\STATE{$N \leftarrow N + 1.$}
\ENDIF
\STATE ${p_{B,n+1,i}} \leftarrow 0,\quad i = 0, \ldots ,n.$\\
Calculate the set of LG points and quadrature weights, $\left\{ {x_{N,i}^{(0.5)},\varpi _{N,i}^{(0.5)}} \right\}_{i = 0}^N$.\\
\FOR{$k = 0$ \TO $N$ }
\STATE{${\mu _i} \leftarrow \xi _{n,i}^{(\alpha )}/\left( {x_{N,k}^{(0.5)} - x_{n,i}^{(\alpha )}} \right),\quad i = 0, \ldots ,n.$\\
$\nu \leftarrow \sum\nolimits_{i = 0}^n {{\mu _i}}$.\\
${p_{B,n+1,i}} \leftarrow {p_{B,n+1,i}} + \varpi _{N,k}^{(0.5)}{\mu _i}/\nu ,\quad i = 0, \ldots ,n.$}
\ENDFOR
\RETURN{${\mathbf{P}}_{B,n+1}^{(1)}$}
\end{algorithmic}
\end{algorithm}
\begin{algorithm}[ht]
\renewcommand{\thealgorithm}{7}
\caption{Modified construction of a row barycentric GIM corresponding to the point $1$}
\label{sec:MCOARBGFTP1:alg1matrix}
\begin{algorithmic}
\REQUIRE Positive integer $n$; the set of GG points and barycentric weights, $\left\{ {x_{n,i}^{(\alpha )},\xi _{n,i}^{(\alpha )}} \right\}_{i = 0}^n$.\\
\ENSURE ${\mathbf{P}}_B^{(1)}$ is already calculated.
\STATE $N \leftarrow \left\lceil {(n - 1)/2} \right\rceil$.
\IF{${\text{mod}}(n,2) = 0\; \wedge \;\text{mod}(N,2) = 0$}
\STATE{$N \leftarrow N + 1.$}
\ENDIF
\STATE ${p_{B,n+1,i}} \leftarrow 0,\quad i = 0, \ldots ,n.$\\
Calculate the set of LG points and quadrature weights, $\left\{ {x_{N,i}^{(0.5)},\varpi _{N,i}^{(0.5)}} \right\}_{i = 0}^N$.\\
\FOR{$k = 0$ \TO $N$ }
\STATE{${\mu _i} \leftarrow \xi _{n,i}^{(\alpha )}/\left( {x_{N,k}^{(0.5)} - x_{n,i}^{(\alpha )}} \right),\quad i = 0, \ldots ,n.$\\
$\nu \leftarrow \sum\nolimits_{i = 0}^n {{\mu _i}}$.\\
${p_{B,n+1,i}} \leftarrow {p_{B,n+1,i}} + \varpi _{N,k}^{(0.5)}{\mu _i}/\nu ,\quad i = 0, \ldots ,n.$}
\ENDFOR
\RETURN{${\mathbf{P}}_{B,n+1}^{(1)}$}
\end{algorithmic}
\end{algorithm}
\begin{algorithm}[ht]
\renewcommand{\thealgorithm}{8}
\caption{Construction of the barycentric GIM for an arbitrary set of integration points}
\label{sec:COTBGFAASOP:alg1matrix}
\begin{algorithmic}
\REQUIRE Positive integers $n, m$; the set of GG points and quadrature weights, $\left\{ {x_{n,i}^{(\alpha )},\varpi _{n,i}^{(\alpha )}} \right\}_{i = 0}^n$; the set of the integration nodes $\{ {x_k}\} _{k = 0}^m$.\\
\STATE $\xi _{n,i}^{(\alpha )} \leftarrow {( - 1)^i}{\mkern 1mu} \sin \left( {{{\cos }^{ - 1}}\left( {x_{n,i}^{(\alpha )}} \right)} \right)\sqrt {\varpi _{n,i}^{(\alpha )}} ,\quad i = 0, \ldots ,n.$
\STATE $N \leftarrow \left\lceil {(n - 1)/2} \right\rceil$.
\STATE Calculate the set of LG points and quadrature weights, $\left\{ {x_{N,i}^{(0.5)},\varpi _{N,i}^{(0.5)}} \right\}_{i = 0}^N$.\\
\STATE ${\mathbf{P}}_B^{(1)} \leftarrow {\mathbf{O}}$.\quad \COMMENT{${\mathbf{O}} \in \mathbb{R}^{(m+1) \times (n+1)}$}\\
\FOR{$j = 0$ \TO $m$ }
\STATE{$\hat x_{N,k}^{(0.5)} \leftarrow \left( {\left( {x_{j} + 1} \right) x_{N,k}^{(0.5)} + x_{j} - 1} \right)/2,\quad k = 0, \ldots ,N$.}
\FOR{$k = 0$ \TO $N$ }
\STATE{${\mu _i} \leftarrow \xi _{n,i}^{(\alpha )}/\left( {\hat x_{N,k}^{(0.5)} - x_{n,i}^{(\alpha )}} \right),\quad i = 0, \ldots ,n.$\\
$\nu \leftarrow \sum\nolimits_{i = 0}^n {{\mu _i}}$.\\
${p_{B,j,i}} \leftarrow {p_{B,j,i}} + \varpi _{N,k}^{(0.5)}{\mu _i}/\nu ,\quad i = 0, \ldots ,n.$}
\ENDFOR
\STATE ${p_{B,j,i}} \leftarrow \left( {x_{j} + 1} \right){p_{B,j,i}}/2,\quad i = 0, \ldots ,n.$
\ENDFOR
\RETURN{${\mathbf{P}}_B^{(1)}$}
\end{algorithmic}
\end{algorithm}
\clearpage
\begin{algorithm}[ht]
\renewcommand{\thealgorithm}{9}
\caption{Construction of the optimal barycentric GIM for any non-symmetric set of integration points}
\label{sec1:alg:1}
\begin{algorithmic}
  \REQUIRE Positive integer numbers $n, m, m_{\max}$; positive real number $r \in [1, 2]$; the set of the integration nodes $\{ {x_k}\} _{k = 0}^n$; relatively small positive number $\varepsilon; \alpha_{a} \in \{0, 0.5\}; \alpha_b \in \{- 0.5 + {\varepsilon}, \alpha_{a}\}$.
\IF{$m > m_{\max}$} \STATE{$\alpha \leftarrow \alpha_{a}$.}
\IF{$m = n$} \STATE{Calculate the set of GG points and quadrature weights, $\left\{ {x_{n,i}^{(\alpha )},\varpi _{n,i}^{(\alpha )}} \right\}_{i = 0}^n$.\\
Calculate $\mathbf{P}_{B}^{(1)}$ using Algorithm \ref{sec:COTBGFAASOP:alg1matrix}.\\
$\mathbf{P}_{OB}^{(1)} \leftarrow \mathbf{P}_{B}^{(1)}$.}
\ELSE \STATE{$M \leftarrow \left\lceil {(m - 1)/2} \right\rceil$.\\
Calculate $\left\{ {x_{m,i}^{(\alpha )},\varpi _{m,i}^{(\alpha )},\xi _{m,i}^{(\alpha )}} \right\}_{i = 0}^m;\left\{ {x_{M,s}^{(0.5)},\varpi _{M,s}^{(0.5)}} \right\}_{s = 0}^M$.\\
\FOR{$k = 0$ \TO $n$ }
\STATE{$\hat x_{M,s}^{(0.5)} \leftarrow \left( {\left( {x_{k} + 1} \right)x_{M,s}^{(0.5)} + x_{k} - 1} \right)/2,\quad s = 0, \ldots ,M$.}
\FOR{$s = 0$ \TO $M$ }
\STATE{${\mu _i} \leftarrow \xi _{m,i}^{(\alpha )}/\left( {\hat x_{M,s}^{(0.5)} - x_{m,i}^{(\alpha )}} \right),\quad i = 0, \ldots ,m.$\\
$\nu \leftarrow \sum\nolimits_{i = 0}^m {{\mu _i}}$.\\
${p_{OB,k,i}} \leftarrow {p_{OB,k,i}} + \varpi _{M,s}^{(0.5)}{\mu _i}/\nu ,\quad i = 0, \ldots ,m.$}
\ENDFOR
\STATE ${p_{OB,k,i}} \leftarrow \left( {x_{k} + 1} \right){p_{OB,k,i}}/2,\quad i = 0, \ldots ,m.$
\ENDFOR
}
\ENDIF
\STATE $M \leftarrow \left\lceil {(m - 1)/2} \right\rceil$.\\
\IF{${\text{mod}}(m,2) = 0\; \wedge \;{\text{mod}}(M,2) = 0\; \wedge \;1 \in \{x_k\}_{k=0}^n$}
\STATE{$M \leftarrow M + 1$.}
\ENDIF
\STATE Calculate $\left\{ {x_{M,s}^{(0.5)},\varpi _{M,s}^{(0.5)}} \right\}_{s = 0}^M$.\\
\FOR{$k  = 0$ \TO $n$} \STATE{$\alpha _k^* \leftarrow \mathop {{\text{argmin}}}\limits_{-1/2< \alpha \le r}  \eta _{k,m}^2(\alpha )$.\\
\IF{$\alpha _k^* \in ( - 0.5, - 0.5 + \varepsilon )$} \STATE{$\alpha _k^* \leftarrow \alpha_b$.}
\ENDIF
\STATE Calculate $\left\{ {z_{m,k,i}^{(\alpha _k^*)},\varpi _{m,k,i}^{(\alpha _k^*)},\xi _{m,k,i}^{(\alpha _k^*)}} \right\}_{i = 0}^m$.\\
\STATE{$\hat x_{M,s}^{(0.5)} \leftarrow \left( {\left( {x_{k} + 1} \right)x_{M,s}^{(0.5)} + x_{k} - 1} \right)/2,\quad s = 0, \ldots ,M$.}
\FOR{$s = 0$ \TO $M$ }
\STATE{${\mu _i} \leftarrow \xi _{m,k,i}^{(\alpha _k^*)}/\left( {\hat x_{M,s}^{(0.5)} - z_{m,k,i}^{(\alpha _k^*)}} \right),\quad i = 0, \ldots ,m.$\\
$\nu \leftarrow \sum\nolimits_{i = 0}^m {{\mu _i}}$.\\
${p_{OB,k,i}} \leftarrow {p_{OB,k,i}} + \varpi _{M,s}^{(0.5)}{\mu _i}/\nu ,\quad i = 0, \ldots ,m.$}
\ENDFOR
\STATE ${p_{OB,k,i}} \leftarrow \left( {x_{k} + 1} \right){p_{OB,k,i}}/2,\quad i = 0, \ldots ,m.$
}
\ENDFOR
\ENDIF
\RETURN{$\mathbf{P}_{OB}^{(1)}$}.
\end{algorithmic}
\end{algorithm}
\clearpage
\begin{algorithm}[ht]
\renewcommand{\thealgorithm}{10}
\caption{Construction of the optimal barycentric GIM for any symmetric set of integration points}
\label{sec1:alg:2}
\begin{algorithmic}
  \REQUIRE Positive integer numbers $n, m_{\max}$; positive even integer $m$; positive real number $r \in [1, 2]$; the set of the integration nodes $\{ {x_k}\} _{k = 0}^n$; relatively small positive number $\varepsilon; \alpha_{a} \in \{0, 0.5\}; \alpha_b \in \{- 0.5 + {\varepsilon}, \alpha_{a}\}$.
\IF{$m > m_{\max}$} \STATE{$\alpha \leftarrow \alpha_{a}$.}
\IF{$m = n$} \STATE{Calculate the set of GG points and quadrature weights, $\left\{ {x_{n,i}^{(\alpha )},\varpi _{n,i}^{(\alpha )}} \right\}_{i = 0}^n$.\\
Calculate $\mathbf{P}_{B}^{(1)}$ using Algorithm \ref{sec:COTBGFAASOP:alg1matrix}.\\
$\mathbf{P}_{OB}^{(1)} \leftarrow \mathbf{P}_{B}^{(1)}$.}
\ELSE \STATE{$M \leftarrow \left\lceil {(m - 1)/2} \right\rceil$.\\
Calculate $\left\{ {x_{m,i}^{(\alpha )},\varpi _{m,i}^{(\alpha )},\xi _{m,i}^{(\alpha )}} \right\}_{i = 0}^m;\left\{ {x_{M,s}^{(0.5)},\varpi _{M,s}^{(0.5)}} \right\}_{s = 0}^M$.\\
\FOR{$k = 0$ \TO $n$ }
\STATE{$\hat x_{M,s}^{(0.5)} \leftarrow \left( {\left( {x_{k} + 1} \right)x_{M,s}^{(0.5)} + x_{k} - 1} \right)/2,\quad s = 0, \ldots ,M$.}
\FOR{$s = 0$ \TO $M$ }
\STATE{${\mu _i} \leftarrow \xi _{m,i}^{(\alpha )}/\left( {\hat x_{M,s}^{(0.5)} - x_{m,i}^{(\alpha )}} \right),\quad i = 0, \ldots ,m.$\\
$\nu \leftarrow \sum\nolimits_{i = 0}^m {{\mu _i}}$.\\
${p_{OB,k,i}} \leftarrow {p_{OB,k,i}} + \varpi _{M,s}^{(0.5)}{\mu _i}/\nu ,\quad i = 0, \ldots ,m.$}
\ENDFOR
\STATE ${p_{OB,k,i}} \leftarrow \left( {x_{k} + 1} \right){p_{OB,k,i}}/2,\quad i = 0, \ldots ,m.$
\ENDFOR
}
\ENDIF
\ELSE
\STATE $M \leftarrow \left\lceil {(m - 1)/2} \right\rceil$.\\
\IF{${\text{mod}}(M,2) = 0\; \wedge \;1 \in \{x_k\}_{k=0}^n$}
\STATE{$M \leftarrow M + 1$.}
\ENDIF
\STATE Calculate $\left\{ {x_{M,s}^{(0.5)},\varpi _{M,s}^{(0.5)}} \right\}_{s = 0}^M$.\\
\FOR{$k = 0$ \TO $n$}
\STATE{\IF{$k \le \left\lfloor {n/2} \right\rfloor$} \STATE{$\alpha _k^* \leftarrow \mathop {{\text{argmin}}}\limits_{-1/2< \alpha \le r}  \eta _{k,m}^2(\alpha )$.\\
\IF{$\alpha _k^* \in ( - 0.5, - 0.5 + \varepsilon )$} \STATE{$\alpha _k^* \leftarrow \alpha_b$.}
\ENDIF
\STATE Calculate $\left\{ {z_{m,k,i}^{(\alpha _k^*)},\varpi _{m,k,i}^{(\alpha _k^*)},\xi _{m,k,i}^{(\alpha _k^*)}} \right\}_{i = 0}^m$.\\
$\left\{ {z_{m,n - k,i}^{(\alpha _{n-k}^*)},\xi _{m,n - k,i}^{(\alpha _{n-k}^*)}} \right\}_{i = 0}^m \leftarrow \left\{ {z_{m,k,i}^{(\alpha _k^*)},\xi _{m,k,i}^{(\alpha _k^*)}} \right\}_{i = 0}^m.$
}
\ENDIF
\STATE{$\hat x_{M,s}^{(0.5)} \leftarrow \left( {\left( {x_{k} + 1} \right)x_{M,s}^{(0.5)} + x_{k} - 1} \right)/2,\quad s = 0, \ldots ,M$.}
\FOR{$s = 0$ \TO $M$ }
\STATE{${\mu _i} \leftarrow \xi _{m,k,i}^{(\alpha _k^*)}/\left( {\hat x_{M,s}^{(0.5)} - z_{m,k,i}^{(\alpha _k^*)}} \right),\quad i = 0, \ldots ,m.$\\
$\nu \leftarrow \sum\nolimits_{i = 0}^m {{\mu _i}}$.\\
${p_{OB,k,i}} \leftarrow {p_{OB,k,i}} + \varpi _{M,s}^{(0.5)}{\mu _i}/\nu ,\quad i = 0, \ldots ,m.$}
\ENDFOR
\STATE ${p_{OB,k,i}} \leftarrow \left( {x_{k} + 1} \right){p_{OB,k,i}}/2,\quad i = 0, \ldots ,m.$
}
\ENDFOR
\ENDIF
\RETURN{$\mathbf{P}_{OB}^{(1)}$}.
\end{algorithmic}
\end{algorithm}
\clearpage

\end{document}